\definecolor{shadecolor}{gray}{0.875}
\DeclareMathOperator{\rank}{rank}
\def\PP{{\mathbb P}}
\newcommand{\D}{\mathcal{D}_X}
\newcommand{\cO}{\mathcal{O}}
\newcommand{\codim}{\operatorname{codim}}
\newcommand{\PGL}{\operatorname{PGL}}
\newcommand{\GL}{\operatorname{GL}}
\newtheorem{lemma}{Lemma}[section]
\newtheorem{theorem}[lemma]{Theorem}
\newtheorem{corollary}[lemma]{Corollary}
\newtheorem{proposition}[lemma]{Proposition}
\newtheorem{question}[lemma]{Question}
\newtheorem{remark}[lemma]{Remark}
\title{Separable rational connectedness and $k$-plane sections of hypersurfaces}
\author{ROYA BEHESHTI, SHIBASHIS MUKHOPADHYAY, ERIC RIEDL }
\begin{document}

\begin{abstract}
    Let $X$ be a smooth hypersurface of degree $d$ in $\PP^n$ over an algebraically closed field of characteristic $p$. We show that $X$ must be separably rationally connected and must contain a free line if either $p \geq d$ or if $ p \geq d-1$ and the defining equation has some partial derivative that is not too singular. We also show that $X$ must be separably rationally connected in any characteristic if $d = 4$ and $n$ is sufficiently large. Along the way, we generalize results on the spaces of $k$-planes in $X$ to characteristic $p$ and connect some of these questions to the spaces of linear sections of $X$.
\end{abstract}

\maketitle

\section{Introduction}
Given a smooth projective variety $X$ over an algebraically closed field, we say that $X$ is \emph{rationally connected} if there is a variety $M$ and dominant map $e: M \times \PP^1 \to X$ such that the natural map $M \times \PP^1 \times \PP^1 \to X \times X$ sending $(m,p_1, p_2)$ to $(e(m,p_1), e(m, p_2))$ is dominant. We say $X$ is \emph{separably rationally connected} if there is a very free curve in $X$, that is, a map $f: \PP^1 \to X$ with $f^*T_X$ ample. In characteristic 0 the two notions agree, but in characteristic $p$, there exist rationally connected varieties which are not separably rationally connected (see \cite[V5]{kollar}). A well-known question asks whether smooth Fano varieties are always separably rationally connected. 
We consider this question for Fano hypersurfaces in projective space.

\begin{question} \label{ques:hypSepRat}
Is every smooth Fano hypersurface over an algebraically closed field separably rationally connected?
\end{question}

This question has already been studied by several different authors. By \cite{Tian}, a smooth Fano hypersurface is separably rationally connected if it contains a free rational curve. Question \ref{ques:hypSepRat} has been answered affirmatively for general Fano hypersurfaces of degree $d$ \cite{CR, CZ, zhu} (see also \cite{LP25}) and for arbitrary smooth Fano hypersurfaces of degree $d < p$ and index at least $2$ \cite{STZ}. Additionally, every cubic hypersurface of dimension at least $2$ is separably rationally connected since it contains smooth cubic surfaces as linear sections, and smooth cubic surfaces are rational and hence, separably rationally connected. Beyond these cases, the question remains open.

In this paper, we study the separable rational connectivity of hypersurfaces for smaller characteristics $p$. In general, the smaller the characteristic of the ground field, the more difficult the question, so as $p$ gets smaller we need stricter hypotheses on $n$ and $d$. We use two different techniques that we hope will be of independent interest. First, we study when hypersurfaces can contain a free line, improving on the condition $p \geq d+1$ from \cite{STZ}.

\begin{theorem} \label{thm:mainLinesThm}
    Let $X = V(f)$ be a smooth Fano hypersurface of degree $d \leq \frac{n+1}{2}$ in $\PP^n$ over an algebraically closed field of characteristic $p$. Suppose one of the following holds: \begin{enumerate}
        \item $p \geq d$  
        \item $p = d-1$ and the singular locus of some linear combination of the partial derivatives of $f$ has dimension $s \leq n-2d$. 
    \end{enumerate}
    Then $X$ must contain a free line and is separably rationally connected. If furthermore $d \leq \frac{n-1}{2}$ and, in the case $p =d-1$, $s \leq n-2d-2$, then the space $F_1(X)$ of lines on $X$ is irreducible of the expected dimension.
\end{theorem}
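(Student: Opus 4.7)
The plan is to produce a free line on $X$; by Tian's theorem, cited in the introduction, this suffices for separable rational connectivity. Equivalently, I aim to show that the evaluation morphism $\ev : \mathcal{L} \to X$ from the universal line over $F_1(X)$ is separable and dominant, so that for a generic $(L, p) \in \mathcal{L}$ the normal bundle $N_{L/X}$ is globally generated on $L \cong \PP^1$. I would first establish that $F_1(X)$ is nonempty with a component of the expected dimension $2n - 3 - d$ and that $\ev$ is dominant; for $d \leq (n+1)/2$ and $X$ smooth, these reduce to standard incidence-correspondence and dimension-count arguments, which should have been generalized to characteristic $p$ earlier in the paper.

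The heart of the proof is the separability of $\ev$. Fix $p \in X$ and a lift $\tilde p$ of $p$ to the affine cone, parametrize lines through $p$ by $[\tilde v] \in \PP(T_p \PP^n)$, and expand $f(\tilde p + t\tilde v) = \sum_{i=1}^d t^i h_i(\tilde v)$ using Hasse--Schmidt coefficients so that each $h_i$ is a well-defined degree-$i$ polynomial in $\tilde v$ in any characteristic. The tangent directions of lines on $X$ through $p$ form the subscheme $\Sigma_p = V(h_1, \dots, h_d) \subset \PP(T_p \PP^n)$, and the line corresponding to $[\tilde v] \in \Sigma_p$ is free at $p$ iff $\Sigma_p$ is smooth of the expected codimension $d$ at $[\tilde v]$, i.e., iff the classical differentials $dh_1(\tilde v), \dots, dh_d(\tilde v)$ are linearly independent as linear forms on $T_p \PP^n$.

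The main obstacle is establishing this Jacobian nondegeneracy in positive characteristic. Note that $h_d(\tilde v) = f(\tilde v)$, so $dh_d(\tilde v) = \nabla f(\tilde v)$ is a vector of degree-$(d-1)$ polynomials in $\tilde v$. In case (1), $p \geq d$, each $h_i$ has degree $i \leq d \leq p$, so its classical differential captures the full Hasse--Schmidt derivative data; combined with smoothness of $X$ (which forces $\nabla f(\tilde v) \neq 0$ whenever $f(\tilde v) = 0$) and a dimension count on the universal incidence variety of triples $(p, [\tilde v], L)$, this yields the required linear independence for generic $p$ and generic $[\tilde v] \in \Sigma_p$. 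In case (2), $p = d - 1$, the partials $\partial f/\partial x_i$ have degree $d - 1 = p$ and may be $p$-th powers, in which case the corresponding components of $\nabla f(\tilde v)$ have identically vanishing classical derivatives and the Jacobian drops rank. The hypothesis that some linear combination $\psi = \sum c_i \partial f/\partial x_i$ has singular locus of dimension $s \leq n - 2d$ is precisely what prevents $\psi$ from collapsing into a $p$-th power off a small locus, and a codimension count then shows that generic $p \in X$ and generic $[\tilde v] \in \Sigma_p$ avoid this locus, so the Jacobian has full rank $d$.

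Finally, for irreducibility of $F_1(X)$ of the expected dimension $2n - 3 - d$ under the stricter bounds $d \leq (n-1)/2$ (and $s \leq n - 2d - 2$ in case (2)), the extra unit of slack in the dimension count ensures that the locus of non-free lines in $F_1(X)$ has codimension at least $2$. Combined with the generic freeness just established and standard connectedness results for Fano schemes of lines on hypersurfaces, this yields irreducibility of $F_1(X)$ of the expected dimension.
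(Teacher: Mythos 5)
Your high-level strategy coincides with the paper's: produce a free line (Tian's theorem then upgrades this to separable rational connectedness), study lines through a point $q$ via the expansion $f = x_0^{d-1}f_1+\cdots+f_d$ so that $F^q(X)=V(f_1,\dots,f_d)\subset\PP^{n-1}$, and identify freeness of a line with linear independence of the linear parts $L_1,\dots,L_d$ at the corresponding point. But the central step is missing. You assert that smoothness of $X$ together with ``a dimension count on the universal incidence variety'' yields the required linear independence for generic $p$ and generic direction; that assertion \emph{is} the theorem, and no naive incidence count delivers it --- a priori the differentials could be dependent at every point of every $\Sigma_p$, which is exactly what happens for the Fermat hypersurface of degree $p+1$, where every component of $F_1(X)$ has excess dimension. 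The paper closes this gap with two inputs absent from your sketch. First, Lemma \ref{lines-linearpart}: translating the base point $q$ along the line sends $L_i\mapsto L_i+i\epsilon L_{i+1}$, and since the coefficient $i$ is nonzero mod $p$ for $i<d\leq p+1$, any dependency among the $L_i$ at a general point of the line can be pushed until it is $L_d$ (or, when $d=p+1$, possibly $L_{d-1}$) that lies in the span of the earlier forms. This is where the hypothesis on $p$ actually enters --- not through your claim that degree $\leq p$ forces the classical differential to capture the Hasse--Schmidt data, which is false for degree exactly $p$ (consider $x_1^p$). Second, Lemma \ref{Lemma2.1}: the locus where $V(h)$ is tangent to $V(h_1,\dots,h_r)$ has dimension at most $r+s$, with $s$ the dimension of the singular locus of $V(h)$. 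Applying this with $h=f_d$ (a smooth general hyperplane section of $X$) or $h=f_{d-1}$ (the restriction of a general partial derivative, whose singular locus is controlled by the hypothesis of case (2)) gives Theorem \ref{nonfree}, the quantitative bound $\dim G\leq d+m+s-a-1$ for any family $G$ of non-free lines sweeping out an $m$-fold; it is this bound, compared with the expected dimension $2n-d-3$, that forces every component of $F_1(X)$ to contain a free line. Your observation that smoothness of $X$ gives $\nabla f(\tilde v)\neq 0$ shows only that one of the $d$ differentials is nonzero, which is far from rank $d$.

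For irreducibility your outline is closer to the paper, but still imprecise at the decisive point. The paper does not apply a connectedness statement to $F_1(X)$; it works with $F^q(X)=V(f_1,\dots,f_d)\subset\PP^{n-1}$ for a general $q$, where \cite[Exp.~XIII, (2.1) and (2.3)]{SGA2} gives connectedness in dimension $n-d-2$, so two components of $F^q(X)$ would meet in a locus of dimension $\geq n-d-2$ consisting of singular, hence non-free, points; this contradicts the bound $d+s-1$ on non-free lines through a general point exactly when $s\leq n-2d-2$. If you want to run the argument on $F_1(X)$ directly you would need to supply and justify the corresponding connectedness result inside the Grassmannian; as written, ``standard connectedness results'' is doing unacknowledged work.
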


In case (2), if the singular locus has dimension at most $n-2d$, then we still have free curves, even though we do not know if $F_1(X)$ is irreducible (see Corollary \ref{bound-nonfree}). The result from Theorem \ref{thm:mainLinesThm} part (1) cannot be improved without some extra hypothesis, since the Fermat hypersurface of degree $p+1$ in characteristic $p$ is known to have no free lines. See \cite{ShenFermat, ChengFermatCurves} for more about the study of free curves on these Fermat hypersurfaces. We prove an analogous result to part (2) of Theorem \ref{thm:mainLinesThm} for $k$-planes, albeit with a stronger hypothesis on the partial derivatives of $f$ (see section \ref{sec-kplanes}). Using a result of Starr, it follows that every hypersurface will be a linear section of a large-dimensional hypersurface having expected-dimensional Fano scheme, a result of independent interest (see Proposition \ref{prop-linearSections}). Since hypersurfaces with a separably rationally connected linear section must themselves be rationally connected, questions about linear sections of hypersurfaces are related to understanding the separable rational connectedness. Results like Proposition \ref{prop-linearSections} also have many other applications, see for instance \cite{KazhdanZiegler} or Lemma 4.2 from \cite{clusteredFamilies}.

As is clear from the Fermat example, results about separable rational connectedness become more and more difficult to obtain as the characteristic of the ground field decreases. Our other main result focuses on quartic hypersurfaces in low characteristic.

\begin{theorem} \label{thm:mainLowChar}
    In any characteristic and for $n \geq 12$, a non-singular quartic hypersurface in $\PP^n$ is separably rationally connected.
\end{theorem}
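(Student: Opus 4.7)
The proof naturally splits by the characteristic $p$ of the ground field.

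\emph{Case $p = 0$ or $p \geq 5$.} Here $d = 4 \leq p$ (with the usual convention in characteristic zero), and $n \geq 12$ comfortably satisfies the hypothesis $d \leq (n+1)/2$ of Theorem~\ref{thm:mainLinesThm}(1), which applies directly and immediately gives separable rational connectedness. No further work is needed in this range.

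\emph{Case $p = 3$.} Since $p = d - 1$, the natural plan is to reduce to Theorem~\ref{thm:mainLinesThm}(2). The required hypothesis is that some linear combination $g = \sum_i c_i \partial_i f$ has $\dim \sing(g) \leq n - 2d = n - 8$; for $n \geq 12$ this asks only that $\dim \sing(g) \leq 4$. Smoothness of $X$ tells us that the cubics $\partial_0 f, \ldots, \partial_n f$ have no common zero in $\PP^n$. I would set up the incidence variety
\[
I = \bigl\{ ((c_i), x) \in \PP^n \times \PP^n : x \in \sing\bigl(\sum\nolimits_i c_i \partial_i f\bigr) \bigr\},
\]
bound the fiber dimensions of the second projection $I \to \PP^n_x$ using smoothness of $X$ (each fiber sits inside a proper subspace cut out by the non-vanishing partials), and then read off from the first projection $I \to \PP^n_c$ that a generic $(c_i)$ yields $g$ with $\dim \sing(g) \leq n - 8$.

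\emph{Case $p = 2$.} Theorem~\ref{thm:mainLinesThm} gives no information and a new construction is needed. My plan is to produce a very free rational curve directly from the geometry of higher linear subvarieties in $X$. For $n \geq 12$ the Fano scheme of $2$-planes in $X$ has positive expected dimension $3(n-2) - \binom{6}{4} = 3n - 21 \geq 15$, so after upgrading the Debarre--Manivel style non-emptiness and expected-dimension statements to characteristic $2$ (the $k$-plane generalizations advertised in the abstract), one obtains a $2$-plane $\Lambda \subset X$. From the exact sequence
\[
0 \to N_{\Lambda/X} \to \cO_\Lambda(1)^{n-2} \to \cO_\Lambda(4) \to 0
\]
one then argues that for a suitably general $\Lambda$ the normal bundle $N_{\Lambda/X}$ is sufficiently positive that every line $L \subset \Lambda$ has $N_{L/X}$ ample, hence is very free in $X$; this would give separable rational connectedness.

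The main obstacle is the $p = 2$ case: the threshold $n \geq 12$ is almost certainly calibrated precisely to make this parameter count on $F_2(X)$ (or, alternatively, on the space of conics of degree appropriate to $X$) succeed. The hard technical work is ruling out pathological smooth quartics in characteristic $2$ whose $k$-plane Fano schemes consist entirely of planes with Frobenius-degenerate normal bundle, in the spirit of the Fermat phenomenon that obstructs free lines when $p = d - 1$.
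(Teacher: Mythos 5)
There is a genuine gap, concentrated in your $p=3$ and $p=2$ cases. For $p=3$ you propose to verify the hypothesis of Theorem~\ref{thm:mainLinesThm}(2), namely that some linear combination $g=\sum_i c_i \partial_i f$ has $\dim \sing(g)\leq n-8$. This is false for the Fermat quartic $f=\sum x_i^4$ in characteristic $3$, which is smooth (the partials $x_i^3$ have no common zero) but for which every linear combination of partials is $\sum c_i x_i^3=\bigl(\sum c_i^{1/3}x_i\bigr)^3$, a cube of a linear form whose singular locus is the entire hyperplane, of dimension $n-1$. No incidence-variety count can rescue this: the relevant Hessian $(\partial_i\partial_j f)$ vanishes identically there, so the fibers of your second projection are all of $\PP^n$. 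This is precisely the Fermat phenomenon the paper warns about for $d=p+1$, and it means Theorem~\ref{thm:mainLinesThm} simply does not cover all smooth quartics in characteristic $3$. Your $p=2$ case is only a sketch, and the step you defer --- ruling out the possibility that \emph{no} line in the chosen linear subspace is free --- is exactly the hard content; hoping that $N_{\Lambda/X}$ is positive enough to make every (or some) line in $\Lambda$ free is hoping for free lines again, which is what can fail.

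The paper's actual proof is uniform in the characteristic and does not pass through Theorem~\ref{thm:mainLinesThm} at all. Since $n\geq 12$, the quartic $X$ contains a $3$-plane $\Lambda$; choosing $4$ partial derivatives of $f$ that restrict to a base-point-free system of cubics on $\Lambda\cong\PP^3$ gives a kernel bundle $M=\ker\bigl(\cO_{\PP^3}(1)^{4}\to\cO_{\PP^3}(4)\bigr)$, and a rational curve $C\subset\Lambda$ with $M|_C$ globally generated is free in $X$ (Proposition~\ref{degree4} and the proposition preceding it). The paper proves $M$ is stable, classifies the splitting type of $M$ on a general line into three cases, and produces the required $C$ as a general line, as a smoothing of a general reducible conic (using gluing data at the node and stability to exclude the bad subcases), or --- exactly in the characteristic-$3$ case where the system is $x_0^3,\dots,x_3^3$, i.e.\ the case that defeats your plan --- as an explicit twisted cubic. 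Separable rational connectedness then follows from the existence of a free curve by Tian's theorem cited in the introduction. If you want to salvage your outline, you would need to replace the $p=3$ and $p=2$ cases with an argument that produces free curves of degree up to $3$ inside a linear section, which is what the kernel-bundle analysis accomplishes.
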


In fact, we develop a new technique for proving that high dimensional hypersurfaces are separably rationally connected, relating it to a restriction theorem for a particular type of kernel bundle. We make a conjecture that would imply the analogue of Theorem \ref{thm:mainLowChar} for all degrees $d$, not just quartics, see Question \ref{ques:kernelBundle}.

\textbf{Structure of paper:} In Section \ref{sec-lines}, we prove Theorem \ref{thm:mainLinesThm}. In section \ref{sec-kplanes}, we generalize this result to $k$-planes, and present a proof of Starr's result on $k$-plane sections of a hypersurface. In Section \ref{sec-lowChar}, we prove Theorem \ref{thm:mainLowChar}, and describe a conjecture about a particular kernel bundle that would prove separable rational connectivity for Fano hypersurfaces in any characteristic with $n$ sufficiently large relative to $d$. In Section \ref{sec=highDegreeCurves}, we prove a result about the dimension of the spaces of higher degree rational curves in characteristic $p$.

\textbf{Acknowledgements:} We gratefully acknowledge helpful conversations with Izzet Coskun, Mohan Kumar, Emanuela Marangone, Janet Page. Eric Riedl was partially supported by NSF CAREER grant DMS-1945944 and Simons Foundation grants 00011850 and 00013673. Roya Beheshti was partially supported by NSF grant DMS-2101935 and Simons Foundation grant 00007742.

\section{Lines on hypersurfaces} \label{sec-lines}

Suppose that $X$ is a smooth hypersurface of degree $d$ in $\PP^n$ over an algebraically closed field of characteristic $p$, and denote by $F_1(X)$ the Hilbert scheme of lines on $X$. By \cite{STZ}, when $d < p$, there exist free lines on $X$, implying that $X$ is separably uniruled. In this section, we focus on the case $p \geq d-1$, and our main goal is to prove Theorem \ref{thm:mainLinesThm}. Most of the arguments in this section are adaptations of the proofs in \cite{BR21}, modified to suit the positive characteristic setting. 

If $d \leq p$ and $d<n/2$, we obtain free lines as in Theorem \ref{thm:mainLinesThm} (1). However, when $d \geq p+1$, we need the additional hypothesis from Theorem \ref{thm:mainLinesThm} (2). To see that this additional hypothesis is necessary, consider the Fermat hypersurface $$x_0^{d}+\dots+x_n^d=0$$ of degree $d=p^r+1$. In this case, every irreducible component of $F_1(X)$ has dimension at least $2n-6$, and since a free line lies only on expected-dimensional components of $F_1(X)$, there are no free lines on $X$. 

The example of the Fermat hypersurfaces suggests an important question: When is a Fano hypersurface not separably uniruled by lines?  Other than Fermat hypersurfaces, we know of no examples of hypersurfaces that fail to be separably uniruled by lines. A characterization of this kind could provide insight into the geometry of $X$, potentially leading to a proof of separable uniruledness (and hence separable rational connectedness) of $X$.

\subsection{Background} We begin by recalling some basic facts about the space of lines $F_1(X)$ and the space of lines through a point on $X$. For a line $l$ in $X$, the Zariski tangent space $T_{[l]}F_1(X)$ is identified with 
$H^0(l,N_{l/X})$, where $N_{l/X}$ denotes the normal bundle of $l$ in $X$. The dimension of every irreducible component of $F_1(X)$ at $[l]$ is at least $$h^0(l,N_{l/X})-h^1(l,N_{l/X}) = 2n-d-3.$$ The number $2n-d-3$ is referred to as the expected dimension of $F_1(X)$. If $H^1(l,N_{l/X})=0$, then $F_1(X)$ is smooth of the expected dimension at $[l]$.  

For a point $q$ on $X$, let $F^q(X)$ denote the space of lines on $X$ through $q$. If $q\in l$, then the Zariski tangent space to $F^q(X)$ at $[l]$ is $T_{[l]}F^q(X) \cong H^0(l,N_{l/X}(-q))$. If $l$ is free, then $F^q(X)$ is smooth of the expected dimension at $[l]$. This tangent space can also be described in terms of the linear parts of the homogeneous pieces of the defining equation of $X$ at $q$. 

Choose homogeneous coordinates so that $q=(1:0:\dots:0)$ and $X=V(f)$. Expand $f$ around $q$ as 
    $$f=x_0^{d-1}f_1+\dots+x_0f_{d-1}+f_d$$
    with $f_1, \dots, f_{d}$ homogeneous polynomials in $x_1, \dots, x_n$.

Then $F^q(X)$ can be identified with $V(f_1,\dots, f_d)\subset V(x_0) = \PP^{n-1}$. Fix some line $l$ in $F^q(X)$. Let $L_i$ denote the linear part of $f_i$ at $l\cap V(x_0)$, where by the linear part of a homogeneous polynomial $g \in k[x_1, \dots, x_n]$ at a point $a \in V(g)$, we mean the linear polynomial $L=\sum_{i=1}^n \frac{\partial g}{\partial x_i}(a) x_i$. Then the embedded tangent space to ${F^q(X)}$ at $[l]$ is $V(L_1, \dots, L_d) \subset V(x_0)$. 

\begin{remark} \label{rem:indOfLi}
    While the particular $L_i$ depend on the choice of coordinates, observe that properties involving whether the first $r$ of them are linearly independent do not. Indeed, $V(f_1, \dots, f_r)$ is the space of lines meeting $X$ to order at least $r+1$ at $p$, and $V(L_1, \dots, L_r)$ is the embedded tangent space to $V(f_1, \dots, f_r)$, which does not depend on the choice of coordinates. 
\end{remark}

\begin{lemma}\label{lines-linearpart}
    Assume that the base field has characteristic $p \geq d-1$, and let $m=\min \{d,p\}$. Suppose $l$ is a line in $X$ and $q$ is a general point of $l$. 
    Let $L_i$ be the linear part of $f_i$ at $l\cap V(x_0)$ as above. Then 
    there is an integer $r$ with $1\leq r \leq m$ such that $L_1, \dots, L_r$ is a basis for the linear span of $L_1, \dots, L_m$.

\end{lemma}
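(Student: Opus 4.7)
My plan is to parametrize a general point of $l$ by $\lambda$ and derive a differential relation linking the $L_j$'s. Fix a specific point $q_0\in l$ and choose coordinates so that $q_0=(1{:}0{:}\cdots{:}0)$, $l\cap V(x_0)=(0{:}1{:}0{:}\cdots{:}0)$, and $l=V(x_2,\dots,x_n)$. Write $f=\sum_{i=1}^{d}x_0^{d-i}f_i$, and let $L_i^{(0)}$ be the linear part of $f_i$ at $l\cap V(x_0)$. A general point of $l$ has the form $q(\lambda)=(1{:}\lambda{:}0{:}\cdots{:}0)$; recentering with $y_1=x_1-\lambda x_0$ and $y_i=x_i$ for $i\neq 1$, and using the Hasse--Taylor expansion $f_i(y_1+\lambda y_0,y_2,\dots,y_n)=\sum_{k\geq 0}\lambda^k y_0^k D_1^k f_i$, the new expansion coefficients centered at $q(\lambda)$ are $f_j(\lambda)=\sum_{k\geq 0}\lambda^k D_1^k f_{j+k}$. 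Since $l\subset X$ forces the pure $x_1^{j+k}$-coefficient of $f_{j+k}$ to vanish, a direct computation of the linear part of $D_1^k f_{j+k}$ at $l\cap V(x_0)$ gives
$$L_j(\lambda)=\sum_{k\geq 0}\binom{j+k-1}{k}\lambda^k L_{j+k}^{(0)}.$$
Termwise differentiation and the integer identity $(k+1)\binom{j+k}{k+1}=j\binom{j+k}{k}$ then yield the key differential relation
$$\frac{d}{d\lambda}L_j(\lambda)=j\cdot L_{j+1}(\lambda),$$
which is valid in any characteristic.

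Now let $r$ be the largest integer for which $L_1(\lambda),\dots,L_r(\lambda)$ are linearly independent at a general $\lambda$. Since $L_1(0)=f_1\neq 0$ by smoothness of $X$ at $q_0$, we have $r\geq 1$. If $r=m$ the conclusion is immediate; otherwise, by the definition of $r$, $L_{r+1}(\lambda)\in\langle L_1(\lambda),\dots,L_r(\lambda)\rangle$ at a general $\lambda$, and Cramer's rule applied to a generically nonzero $r\times r$ minor (which is a nonzero polynomial in $\lambda$) produces coefficients $c_j^{(1)}\in k(\lambda)$ with $L_{r+1}(\lambda)=\sum_{j=1}^{r}c_j^{(1)}(\lambda)L_j(\lambda)$ as a rational identity.

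I then prove by induction on $k\geq 1$ (with $r+k\leq m$) that $L_{r+k}(\lambda)\in\langle L_1(\lambda),\dots,L_r(\lambda)\rangle$. Writing $L_{r+k}(\lambda)=\sum_j c_j^{(k)}(\lambda)L_j(\lambda)$ and differentiating via the differential relation gives
$$(r+k)\,L_{r+k+1}(\lambda)=\sum_j\frac{dc_j^{(k)}}{d\lambda}\,L_j(\lambda)+\sum_{j=1}^{r}j\,c_j^{(k)}(\lambda)\,L_{j+1}(\lambda).$$
The right-hand side lies in $\langle L_1(\lambda),\dots,L_{r+1}(\lambda)\rangle$, which coincides with $\langle L_1(\lambda),\dots,L_r(\lambda)\rangle$ by the base case. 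Since $1\leq r+k\leq m-1\leq p-1$, the scalar $r+k$ is a unit in the base field, so we may divide to conclude $L_{r+k+1}(\lambda)\in\langle L_1(\lambda),\dots,L_r(\lambda)\rangle$. This is exactly where the hypothesis $m=\min\{d,p\}\leq p$ is used to avoid division by zero.

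The main technical hurdle will be the careful derivation of the explicit formula for $L_j(\lambda)$ and the identification of the binomial coefficient arising in the linear-part computation; once this and its differential consequence are established, the inductive step is routine. A secondary bookkeeping point is to verify that the rational coefficients $c_j^{(k)}(\lambda)$ and their derivatives are well-defined on a Zariski open subset of $\lambda$, which follows from Cramer's rule applied to a minor that is a nonzero polynomial.
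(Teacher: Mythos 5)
Your proof is correct and follows essentially the same route as the paper: both arguments exploit that moving $q$ along $l$ deforms $L_j$ by $j\epsilon L_{j+1}$ to first order, with the coefficient $j$ invertible because $j\leq m-1\leq p-1$. You simply make the paper's first-order deformation exact, via the expansion $L_j(\lambda)=\sum_k\binom{j+k-1}{k}\lambda^k L_{j+k}^{(0)}$ and the relation $\frac{d}{d\lambda}L_j=jL_{j+1}$, and replace the paper's minimal-counterexample/openness argument with a direct induction.
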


\begin{proof}
    As described above, we have selected a choice of coordinates where $q=(1:0:\dots:0)$. As in Remark \ref{rem:indOfLi}, the linear independence or dependence of the $L_i$ is independent of the choice of coordinate system on $V(x_0)$, so we may assume that $l$ is given by $x_2=\dots=x_n=0$. Then each $L_i$ is a linear form in $x_2, \dots, x_n$.
    
    Assume, for contradiction, that there exist indices $i<j\leq p$ such that $L_1, \dots, L_i, L_j$ are independent but $L_1, \dots, L_i, L_{i+1}$ are linearly dependent. Let $j$ be the smallest integer greater than $i$ for which $L_1, \dots, L_i, L_{j}$ is linearly independent. 
    
    Deforming $q$ to $(1:\epsilon)$ on $l$, and replacing $x_1$ by $x'_1=x_1-\epsilon x_0$, $L_i$ becomes $L_{i}+i\epsilon L_{i+1}$, so $L_1, \dots, L_{i}, L_{j-1}$ are linearly independent. Since linear independence is an open condition, this contradicts the assumption that $q$ is a general point of $l$. 
\end{proof}

Let $\D$ be the linear system on $\PP^n$ spanned by vanishing locus of the partial derivatives
$$\frac{\partial f}{\partial x_0}, \dots, \frac{\partial f}{\partial x_n}.$$ 
When the characteristic of the base field is $0$, by Bertini's theorem, a general member of $\D$ is non-singular. In characteristic $p$, this is no longer true. For example, when $d=p^m+1$ and $X$ is the Fermat hypersurface of degree $d$, every memeber of $\D$
is everywhere non-reduced. We will show that when $d\leq p$, or $d=p+1$ and a general member of $\D$ is not too singular, then the space of lines in $X$ has the expected dimension. 


The proof of the following lemma from \cite{BR21} works in characteristic $p$ without any modification.
\begin{lemma}\label{Lemma2.1}
    Let $h_1,\dots,h_r$ be homogeneous polynomials on $\PP^n$ of degree strictly less than
$d$ and let $h$ be a polynomial of degree $d$ such that $V(h)$ has singular locus of dimension $s$,
where $s=-1$ if $V(h)$ is non-singular. Then the locus where $V(h)$ is tangent to $V(h_1, \dots, h_r)$ has
dimension at most $r+s$.
\end{lemma}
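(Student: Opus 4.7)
The plan is to prove this lemma by induction on $r$. For the base case $r = 0$, the locus where $V(h)$ is tangent to $V() = \PP^n$ consists of points $p$ with $T_p V(h) = \PP^n$, i.e., $p \in \mathrm{Sing}(V(h))$, of dimension $s$ by hypothesis.

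For the inductive step, I would split the tangency locus $T = T_r$ into the part $T^{\flat}$ where $dh(p)$ already lies in $\mathrm{span}(dh_1(p), \ldots, dh_{r-1}(p))$, and its complement $T^{\sharp} = T \setminus T^{\flat}$. Since $T^{\flat} \subseteq T_{r-1} \cap V(h_r)$, the inductive hypothesis gives $\dim T^{\flat} \leq (r-1) + s$. The substantive step is bounding $T^{\sharp}$, where we may write $dh(p) = \sum_{i=1}^r a_i\, dh_i(p)$ with $a_r \neq 0$ and the $dh_i(p)$ linearly independent (rank drops force $p$ into $T^{\flat}$, absorbed by induction).

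The key idea is to convert tangency at $p$ into a singularity of a nearby degree-$d$ hypersurface. Since $d - \deg h_i > 0$, one can choose $\ell_i \in H^0(\PP^n, \cO(d - \deg h_i))$ with $\ell_i(p) = a_i$. Then $h' := h - \sum_i \ell_i h_i$ still has degree $d$, and using $h_i(p) = 0$, direct computation gives $h'(p) = 0$ and $dh'(p) = dh(p) - \sum_i a_i\, dh_i(p) = 0$, so $p \in \mathrm{Sing}(V(h'))$. Setting $P := \prod_i H^0(\PP^n, \cO(d - \deg h_i))$, form the incidence
$$Z = \{ (p, (\ell_i)) \in \PP^n \times P : p \in \mathrm{Sing}(V(h - \sum_j \ell_j h_j)) \cap V(h_1, \ldots, h_r) \}.$$
The projection $Z \to \PP^n$ surjects onto $T^{\sharp}$ with fibers of constant codimension $r$ in $P$ (the conditions $\ell_i(p) = a_i$ cut out a codimension-$r$ affine subspace of $P$, where the $a_i$ are uniquely determined by the linear independence of the $dh_i(p)$). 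On the other side, the fiber of $Z \to P$ over $(\ell_i) = 0$ is $\mathrm{Sing}(V(h)) \cap V(h_1, \ldots, h_r)$, of dimension at most $s$; Chevalley's upper semi-continuity of fiber dimension then forces the generic fiber of $Z \to P$ to have dimension at most $s$. Combining the two counts yields $\dim T^{\sharp} + (\dim P - r) \leq \dim P + s$, i.e., $\dim T^{\sharp} \leq r + s$.

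The main obstacle is the stratification step: one must carefully verify that the rank-drop points of the Jacobian $(dh_1, \ldots, dh_r)$ are genuinely absorbed into $T^{\flat}$, so that on $T^{\sharp}$ the fiber codimension of $Z \to \PP^n$ is exactly $r$. This is what makes the degree hypothesis $\deg h_i < d$ essential: without it one could not produce the $\ell_i$ matching prescribed values. Since the only tools used are a fiber-dimension count, Chevalley's upper semi-continuity, and the degree manipulation above—all characteristic-independent—the argument from \cite{BR21} transfers to characteristic $p$ verbatim.
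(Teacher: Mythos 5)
The paper itself does not reprove this lemma (it only asserts that the characteristic-zero proof from [BR21] carries over verbatim), so your argument has to stand on its own. Your overall mechanism --- replacing $h$ by $h - \sum_i \ell_i h_i$ so that a tangency at $p$ becomes a singular point of a nearby degree-$d$ form, then counting dimensions in an incidence correspondence --- is indeed the right idea. But the final step, ``combining the two counts,'' has a genuine gap. Upper semicontinuity of fiber dimension together with properness of $Z \to P$ does show that the \emph{generic} fiber of $Z \to P$ has dimension at most $s$; however, that only bounds the part of $Z$ lying over a dense open subset of $P$. Your inequality $\dim T^{\sharp} + (\dim P - r) \leq \dim P + s$ needs $\dim \overline{Z^{\sharp}} \leq \dim P + s$, where $Z^{\sharp}$ is the part of $Z$ over $T^{\sharp}$, and a component of $\overline{Z^{\sharp}}$ need not dominate $P$: it can sit entirely over the proper closed locus in $P$ where $\dim \mathrm{Sing}\bigl(V(h - \sum_j \ell_j h_j)\bigr)$ jumps above $s$, and over that locus the fibers of $Z \to P$ are not controlled by the hypothesis on $V(h)$. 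Nothing in your setup rules this out; the fibers of $Z \to \PP^n$ over $T^{\sharp}$ are affine-linear subspaces of codimension $r$ in $P$, and for $r \geq 2$ a proper closed subset of $P$ avoiding the origin can contain a positive-dimensional family of such subspaces. (A secondary, harmless issue: a dependence among $dh_1(p),\dots,dh_{r-1}(p)$ alone does \emph{not} put $p$ in $T^{\flat}$, so the rank of the Jacobian can drop on $T^{\sharp}$; stratifying by the rank $\rho$ gives fibers of codimension $\rho$ and the bound $\rho + s \leq r+s$, so this part is fixable --- but it does not repair the main problem.)

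The standard way to close the gap, and the shape of the argument in [BR21], is to add one polynomial at a time rather than all $r$ at once. By induction one may assume $\dim T_{r-1} \leq r-1+s$; for $p \in T \setminus T_{r-1}$ the coefficient $c(p)$ of $dh_r(p)$ in the relation $dh(p)=\sum a_i\,dh_i(p)$ is uniquely determined and nonzero. For a general auxiliary form $g$ of degree $d - \deg h_r$, the function $p \mapsto c(p)/g(p)$ on a maximal-dimensional component is either dominant onto $\mathbb{A}^1$ --- in which case its general fiber lies over a parameter value $t$ for which $\mathrm{Sing}\bigl(V(h - tgh_r)\bigr)$ still has dimension at most $s$ (semicontinuity from $t=0$ on a one-dimensional base, where the jumping locus is finite and avoidable), and the inductive hypothesis applied to $h-tgh_r$ and $h_1,\dots,h_{r-1}$ finishes the count --- or it is constant for all general $g$, which forces the component to be a single point since forms of positive degree separate points. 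Your all-at-once incidence variety does not give you access to this dominant-versus-constant dichotomy, which is precisely what is needed to guarantee that the dimension count happens over a parameter value where the singular locus has not jumped.
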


\subsection{Results} We now are ready to study the space of nonfree lines in a smooth hypersurface $X$. 

\begin{theorem}\label{nonfree}
Suppose $d \leq p+1$. Let $G$ be an irreducible subvariety of $F_1(X)$, and let $Y \subset X$ be the subvariety swept out by the lines parametrized by $G$. Set $m=\dim Y$ and $r=\dim G$. When $d \leq p$, set $s=-1$, and when $d=p+1$, let $s$ be the dimension of the singular locus of a general member of $\D$. If for a general $l$ parametrized by $G$, $$h^1(l,N_{l/X}(-1))=a > 0,$$ then 
$$r \leq d+m+s-a-1.$$
\end{theorem}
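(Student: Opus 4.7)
The plan is to adapt the characteristic-zero argument of \cite{BR21} using the two positive-characteristic tools assembled earlier in this section, Lemmas \ref{lines-linearpart} and \ref{Lemma2.1}. Pick a general $[l] \in G$ and a general $q \in l$, so the fiber $G^q := \{[l'] \in G : q \in l'\}$ has dimension $r + 1 - m$. Choose homogeneous coordinates so that $q = (1:0:\dots:0)$ and $l = V(x_2, \dots, x_n)$, and expand $f = x_0^{d-1}f_1 + \dots + f_d$ with each $f_i$ homogeneous of degree $i$ in $x_1, \dots, x_n$. Let $L_i$ denote the linear part of $f_i$ at $(0{:}1{:}0{:}\dots{:}0)$, viewed as a linear form on $V(x_0) \cong \PP^{n-1}$, so that $T_{[l]} F^q(X) = V(L_1, \dots, L_d)$.

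From the twisted normal bundle sequence
\[
0 \to N_{l/X}(-1) \to N_{l/\PP^n}(-1) \cong \cO_l^{n-1} \to \cO_l(d-1) \to 0
\]
and a cohomology computation, the number $h^1(l, N_{l/X}(-1)) = a$ is identified with the cokernel dimension of the linear map $k^{n-1} \to k^d$ whose matrix records the coefficients $c_{i,j}$ appearing in $L_i = \sum_j c_{i,j} x_j$. Consequently the linear forms $L_1, \dots, L_d$ span a subspace of $H^0(\cO_{\PP^{n-1}}(1))$ of dimension exactly $d - a$, and by Lemma \ref{lines-linearpart} (applicable because $q$ is generic on $l$) the forms $L_1, \dots, L_{d-a}$ form a basis for this span. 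The extra hypothesis on the singular locus of partials in the $d = p + 1$ case is precisely what keeps Lemma \ref{lines-linearpart} applicable there.

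Every line parametrized by $G^q$ lies in $Y$, so its tangent direction at $q$ lies in $T_q Y$; hence $G^q$ is contained in the linear subspace $\Lambda := \PP(T_q Y) \cap V(x_0) \subset V(x_0) = \PP^{n-1}$ of dimension $m - 1$, and $T_{[l]} G^q \subset V(L_1, \dots, L_{d-a}) \cap \Lambda$. To obtain the sharp bound on $\dim G^q$, consider a general element $g \in \D$, a polynomial of degree $d-1$ whose singular locus has dimension at most $s$ by hypothesis (with $s = -1$ when $d \leq p$, in which case a general member of $\D$ is non-singular). The heart of the argument is to show that $G^q$ is contained in the locus where $V(g)$ is tangent to the complete intersection $V(L_1, \dots, L_{d-a})$: a line $[l'] \in G^q$ corresponds to a point of $V(L_1, \dots, L_{d-a})$, and the tangency with $V(g)$ there is forced by the combination of $l' \subset X$ (so that the restrictions $\partial_{x_j} f|_{l'}$ satisfy the relations recorded by the $L_i$) and the $a$-fold linear dependence among the $L_i$.

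Applying Lemma \ref{Lemma2.1} with $h = g$ and $h_i = L_i$ for $i = 1, \dots, d - a$ then yields $\dim G^q \leq (d - a) + s$, and combined with the generic fiber identity $r = \dim G^q + m - 1$ this gives $r \leq d + m + s - a - 1$. The principal obstacle is the third step: justifying the tangency of $V(g)$ to $V(L_1, \dots, L_{d - a})$ at every point of $G^q$. In the characteristic-zero setting of \cite{BR21} this follows quickly from Bertini-type smoothness for a general partial, but in positive characteristic (especially when $d = p + 1$) one must work more carefully, using the explicit Taylor-expansion description of the $L_i$ together with the hypothesis on the singularity dimension $s$ to extract the correct tangency behavior from the relation $\sum_j x_j \partial_{x_j} f = d \cdot f$.
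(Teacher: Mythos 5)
Your setup (general $l$, general $q\in l$, the expansion of $f$, the identification $\codim_H V(L_1,\dots,L_d)=d-a$, and the fiber count $\dim G^q=r+1-m$) matches the paper, but the core of the argument has a genuine gap, and it mislocates where the hypothesis on $\D$ enters. Lemma \ref{lines-linearpart} only controls $L_1,\dots,L_m$ with $m=\min\{d,p\}$; when $d=p+1$ it says nothing about $L_d$, so you cannot conclude that $L_1,\dots,L_{d-a}$ span all of $L_1,\dots,L_d$. The paper's proof is built precisely around this dichotomy: either (i) $L_d$ lies in the span of $L_1,\dots,L_{d-a}$, in which case $V(f_d)=X\cap H$ (smooth for general $H$) is tangent to $V(f_1,\dots,f_{d-a})$ along the lines of $G$ through $q$, and Lemma \ref{Lemma2.1} with $h=f_d$ and singular locus of dimension $-1$ gives $r+1-m\le d-a-1$; or (ii) $d=p+1$ and $L_1,\dots,L_{d-a-1},L_d$ are independent while $L_{d-1}$ depends on $L_1,\dots,L_{d-a-1}$, in which case Lemma \ref{Lemma2.1} applied to $h=f_{d-1}=\partial f/\partial x_0|_{x_0=0}$ forces $V(f_{d-1})$ to be singular in dimension at least $(r+1-m)-(d-a-1)$, and the hypothesis that a general member of $\D$ has singular locus of dimension $s$ caps this at $s+1$. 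Your claim that the hypothesis on $\D$ is ``what keeps Lemma \ref{lines-linearpart} applicable'' is not right: that lemma needs no such hypothesis; the hypothesis is used only in case (ii), to bound the singularities of the restricted partial derivative.

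Your proposed unified tangency statement --- that a \emph{general} $g\in\D$ is tangent to $V(L_1,\dots,L_{d-a})$ along the family of lines through $q$ --- is both unjustified (as you acknowledge in your final paragraph) and not the right statement. The tangency that actually follows from $l'\subset X$ and the linear dependence among the $L_i$ involves the specific polynomials $f_d=f|_{H}$ or $f_{d-1}=\partial f/\partial x_0|_{H}$, not a general member of $\D$; moreover Lemma \ref{Lemma2.1} requires the $h_i$ to be polynomials (not linear forms at a point) of degree strictly less than $\deg h$, so with $h=g$ of degree $d-1$ you could not even apply it to $f_{d-a}$ when $a=1$. Finally, in the case $d\le p$ you assert that a general member of $\D$ is non-singular; this is neither a hypothesis of the theorem nor true in general (there $s=-1$ is simply a convention), and the paper's case (i) avoids $\D$ entirely by using the smooth hyperplane section $V(f_d)$. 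The skeleton and the final inequality are right, but the missing case analysis and tangency argument are exactly the content of the proof, so as written the proposal is incomplete.
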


\begin{proof}
Let $H$ be a general hyperplane in $\PP^n$. Let $l$ be a general line parametrized by $G$, and let $q$ be a general point on $l$. Choose homogeneous coordinates so that $H=V(x_0)$ and $q=(1:0:\dots:0)$. Suppose $X=V(f)$ and expand $f$ around $q$ as 
$$f=x_0^{d-1}f_1+ \dots +x_0f_{d-1}+f_d,$$ where each $f_i$ is a homogeneous polynomial in $x_1, \dots, x_n$.
Let $L_i$ denote the linear part of $V(f_i)$ at $l\cap V(x_0)$ (viewed as a linear form on $H \cong \PP^{n-1})$. 

Since $$h^0(l,N_{l/X}(-1)) = \codim_H V(L_1,\dots ,L_{d}),$$ and we assume $h^1(N_{l/X}(-1))=a$, we have $$h^0(N_{l/X}(-1)) = n-d-1+a .$$ Thus, the codimension of the vanishing locus
$V(L_1, \dots ,L_d)$ in $H$ is $d-a$. Then there are two possibilities:  
\begin{itemize}

    \item $L_1, \dots, L_{d-a}$ are linearly independent and $L_{d-a+1}, \dots, L_d$ are linear combinations of them. In particular, $L_d$ is a linear combination of $L_1, \dots, L_{d-a}$. 
    
    Since $H=V(x_0)$ is a general hyperplane, $V(f_d)=X\cap V(x_0)$ is non-singular. By Lemma \ref{Lemma2.1}, the locus in $V(f_d)$ at which $L_d$ is a linear combination of $L_1, \dots, L_{d-a}$ has dimension at most $d-a-1$. On the other hand, the set of lines of $G$ that pass through $q$ has dimension $r+1-m$, so $$r+1-m \leq d-a-1.$$
    
    \item $d=p+1$, $L_1, \dots, L_{d-a-1}, L_{d}$ are linearly independent, and $L_{d-a}, \dots, L_{d-1}$ are linear combinations of $L_1, \dots, L_{d-a-1}$. In particular, $L_{d-1}$ is a linear combination of $L_1, \dots, L_{d-a-1}$. 
    
    Since the family of lines parametrized by $G$ through $q$ has dimension $r+1-m$, and since $l$ is general, there is a locus of dimension at least $ r+1-m$ in $V(f_{d-1})$ where the linear form $L_{d-1}$ is a linear combination of $L_1, \dots, L_{d-a-1}$. Hence, by Lemma \ref{Lemma2.1}, the hypersurface $V(f_{d-1})$ is singular in dimension at least $ \geq (r+1-m)-(d-a-1)$. 
    
    On the other hand, $f_{d-1}= \frac{\partial f}{\partial x_0}|_{\{x_0=0\}}$. Since $x_0$ is a general coordinate, the singular locus of $V(\frac{\partial f}{\partial x_0})$ has dimension $s$. Therefore, the singular locus of $V(f_{d-1})$ has dimension $\leq s+1$. Therefore, $$r+1-m -(d-a-1) \leq s+1,$$ which gives the desired result.  
    
\end{itemize}
\end{proof}

\begin{corollary}\label{bound-nonfree}
Let $X$ be as above with $d \leq p+1$. If $d \leq p$, let $s=-1$; and if $d=p+1$, let $s$ be the dimension of the singular locus of a general member of $\D$.
    \begin{itemize}
        \item[(1)] The space of non-free lines in $X$ has dimension $\leq n+d+s-3$.
        \item[(2)] The space of non-free lines through a general point of $X$ has dimension $\leq d+s-1$. 
     \end{itemize}

\end{corollary}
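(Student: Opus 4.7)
The plan is to deduce both parts directly from Theorem \ref{nonfree}, using the standard fact that a line $l \subset X$ is non-free if and only if $a := h^1(l, N_{l/X}(-1)) \geq 1$. This follows from writing $N_{l/X} \cong \bigoplus \cO_{\PP^1}(a_i)$ and noting that $l$ is free precisely when all $a_i \geq 0$, which is equivalent to the vanishing of $H^1$ of the twist by $\cO(-1)$.

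For part (1), let $G$ be any irreducible component of the locus of non-free lines on $X$, and let $Y$ be the subvariety swept out by the lines parametrized by $G$. Since $Y \subseteq X$, we have $m := \dim Y \leq n-1$, and by construction we may take $a \geq 1$. Theorem \ref{nonfree} then gives
$$\dim G \;\leq\; d + m + s - a - 1 \;\leq\; d + (n-1) + s - 1 - 1 \;=\; n + d + s - 3,$$
which is the desired bound.

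For part (2), fix a general point $q \in X$, and consider any irreducible component $G$ of the locus of non-free lines with sweep $Y$ of dimension $m$. If $m < n-1$, then $Y$ is a proper closed subvariety of $X$, so the general $q \in X$ does not lie in $Y$ and no line of $G$ passes through $q$; such components contribute nothing. Hence we may restrict to components with $m = n-1$, so that $Y = X$. Using the incidence variety $\mathcal{U} = \{(l, q') \in G \times X : q' \in l\}$, which has dimension $\dim G + 1$ and maps surjectively onto $Y = X$, the fiber over the general point $q$ parametrizes the lines of $G$ through $q$ and has dimension $\dim G + 1 - (n-1) = \dim G - n + 2$. Applying Theorem \ref{nonfree} with $m = n-1$ and $a \geq 1$ bounds $\dim G \leq n + d + s - 3$, so the space of non-free lines of $G$ through $q$ has dimension at most
$$(n + d + s - 3) - n + 2 \;=\; d + s - 1,$$
as claimed.

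The argument has essentially no obstacle beyond Theorem \ref{nonfree} itself; the only thing to observe is the case split in (2), namely that components whose sweep fails to cover $X$ can be discarded when bounding lines through a general point, so that the worst-case bound on $r$ from (1) gets translated into the bound on lines through $q$ by the standard incidence-correspondence dimension count $r + 1 - m$.
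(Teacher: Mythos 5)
Your proposal is correct and follows essentially the same route as the paper: part (1) is exactly the paper's application of Theorem \ref{nonfree} with $m \leq n-1$ and $a \geq 1$, and part (2) is the incidence-correspondence fiber count $r+1-m$ that the paper leaves implicit with ``Part (2) follows immediately.'' The only cosmetic difference is that you first discard non-dominating components and then use the worst-case bound from (1), whereas one could equally apply $r \leq d+m+s-a-1$ directly to get $r+1-m \leq d+s-a \leq d+s-1$; both give the same conclusion.
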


\begin{proof}
    Let $G$ be an $r$-dimensional irreducible subvariety of $F(X)$ paramterizing non-free lines, and let $m$ be the dimension of the subvariety of $X$ swept out by $G$.  Then $m \leq n-1$, so by the above theorem, $$\dim G \leq d+m+s-a-1 \leq d+(n-1)+s-2.$$ Part (2) follows immediately. 
    
\end{proof}

\begin{proof}[Proof of Theorem \ref{thm:mainLinesThm}]
We first show that every irreducible component of $F_1(X)$ contains a free line, which implies that the lines parametrized by that component sweep out $X$, and the component has the expected dimension $2n-d-3$. Note that the dimension of any irreducible component of $F_1(X)$ is at least the expected dimension. 

Let $F$ be an irreducible component of $F_1(X)$, and let $l$ be a general line parametrized by $F$. If $l$ is free, then $F$ is dominating and has the expected dimension. So suppose instead that every line parametrized by $F$ is non-free. Let $a=h^1(l,N_{l/X}(-1))$, so $a \geq 1$. If $F$ is not dominating, then by Theorem \ref{nonfree}, 
$$ \dim F \leq d+(n-2)+s-a-1 \leq d+n+s-4 \leq 2n-d-4.$$
a contradiction. Therefore, $F$ must be contain a free line. Applying Theorem \ref{nonfree} again gives
$$\dim F \leq d+(n-1)+s-2 \leq 2n-d-3,$$
so we conclude that $\dim F = 2n-d-3$.

    We next show $F_1(X)$ is irreducible. Suppose to the contrary that $F_1(X)$ has more than one irreducible component. By the above argument, every irreducible component is dominating. Therefore, for a general point $q \in X$, the space $F^q(X)$ of lines in $X$ through $q$ is reducible. Let $\Sigma_i$, $1 \leq i \leq l$, be the irreducible components of $F^q(X)$. Then $\dim \Sigma_i \geq n-d-1$ for every $i$. By \cite[Exp. XIII, (2.1) and (2.3)]{SGA2}, (note that this holds in arbitrary characteristic), $F^q(X)=V(f_1,...,f_d)$ is connected in dimension $n-2-d$. Hence, there exist indices $i,j$, $1\leq i<j\leq l$ such that $\Sigma_i \cap \Sigma_j$ has dimension at least $n-d-2$. If not, then removing the intersections $\Sigma_i \cap \Sigma_j$ for $1 \leq i < j \leq l$ would disconnect $F^q(X)$, which is not possible. 
    
    Every line $l$ parametrized by the intersection of $\Sigma_i$ and $\Sigma_j$ is a singular point of $F^q(X)$ and is therefore non-free. By Corollary \ref{bound-nonfree}, the space of non-free lines through a general point of $X$ has dimension $\leq d+s-1$, so $$n-d-2 \leq d+s-1,$$ contradicting our assumption that $s \leq n-2d-2$. Thus $F_1(X)$ is irreducible.
\end{proof}

\section{Family of $k$-planes and $k$-plane sections} \label{sec-kplanes}
Let $X$ be a smooth hypersurface of degree $d$ in $\PP^n$ over an algebraically closed field of characteristic $p \geq d-1$. Let $F_k(X)$ denote the moduli space of $k$-planes contained in $X$. Then the expected dimension of $F_k(X)$ 
is $$(k+1)(n-k)-{d+k\choose k},$$ and every irreducible component of $F_k(X)$ has dimension at least the expected dimension. 

In this section, we have two main goals. First, we generalize the results of the previous section to higher dimensional linear subvarieties of $X$, showing that $F_k(X)$ has the expected dimension given $n$ large enough and a (slightly more convoluted) condition on the partial derivatives of the defining equation of $X$ when $d=p+1$. Second, we present a result of Starr that shows that if $F_k(X)$ has the expected dimension, every hypersurface in $\PP^k$ is a $k$-plane section of $X$.

\subsection{Dimension of $F_k(X)$}
Assume that $d \leq p+1$. Our approach closely follows the notation of \cite{BR21}, and we include a brief review for the convenience of the reader.  Given a $k$-plane $\Phi$ in $X$, let $\Lambda$ be a general $(k-1)$-plane contained in $\Phi$. Choose homogeneous coordinates so that $$\Lambda = V(x_k,\dots,x_n).$$ 

Define $T$ as the set of all multisets of size at most $d-1$ from the indices $\{0, \dots, k-1\}$. For each $I \in T$, let $x^I$ denote the corresponding monomial. Expanding $f$ around $\Lambda$, we can write
$$f= \sum_{I \in T} c_Ix^I $$
where each $c_I$ is a homogeneous polynomial of degree $d-|I|$ in $x_k, \dots, x_n$. 

The space of $k$-planes in $\PP^n$ containing $\Lambda$ can be identified with the projective space of dimension $n-k$, denoted by $P= V(x_0,\dots,x_{k-1})$. Let $F^{\Lambda}(X)$ be the space of $k$-planes in $X$ containing $\Lambda$. Then by Proposition 3.3 of \cite{BR21}, the $c_I$ cut out $F^{\Lambda}(X)$ in $P$, and for $\Phi \in F^{\Lambda}(X)$, the tangent space to $F^{\Lambda}(X)$ at $\Phi$ is cut out by the linear parts of the $c_I$ at $\Phi \cap P$. We denote these linear parts by $L(c_I)$. If $\Phi$ is the $k$-plane $V(x_{k+1},\dots,x_n)$, then $L(c_I)$ is the coefficient of $x^I x_k^{d-1-|I|}$ in the expression for $f$.

With this notation, a subset $T' \subset T\setminus \{\emptyset\}$ is called {\em{downward}} if for any $I \in T'$, we have 
$$I\cup \{j\} \in T' \text{ for every } j \text{ such that } 0 \leq j \leq k-1.$$
The following generalizes Lemma \ref{lines-linearpart} to higher-dimensional linear subvarieties.

\begin{lemma}\label{downward}
    Suppose $X$ is a smooth hypersurface of degree $d \leq p+1$. Suppose $\Phi$ is a $k$-plane in $X$ and $\Lambda$ is a general $(k-1)$-plane in $\Phi$. Choose a system of homogeneous coordinates so that $$\Lambda =V(x_{k},\dots,x_n), \;\; \Phi=V(x_{k+1},\dots,x_n).$$ Then there is a downward subset $T' \subset T\setminus \{ \emptyset \}$ such that $\{ L(c_I), I \in T' \}$, form a basis for the span of $\{ L(c_I), I \in T \setminus \{ \emptyset \} \}$.
\end{lemma}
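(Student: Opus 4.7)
The plan is to adapt the strategy of Lemma \ref{lines-linearpart} to the higher-dimensional setting, combining a deformation of $\Lambda$ inside $\Phi$ with an iterative ``straightening'' argument and exploiting both the generality of $\Lambda$ and the openness of the relevant conditions.

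First I would derive the higher-dimensional analogue of the key transformation rule. For any $j \in \{0, \ldots, k-1\}$, the linear coordinate change $x_k \mapsto x_k' = x_k - \epsilon x_j$ moves $\Lambda$ to the nearby $(k-1)$-plane $\Lambda' = V(x_k', x_{k+1}, \ldots, x_n) \subset \Phi$. Expanding $f$ in the new coordinates and extracting the linear parts at the point $\Phi \cap P$ (which is unchanged by the coordinate change), a direct calculation yields
\[
L(c'_{I'}) = L(c_{I'}) + \epsilon (d - |I'|)\, L(c_{I' \setminus \{j\}}) + O(\epsilon^2)
\]
when $j \in I'$, and $L(c'_{I'}) = L(c_{I'})$ when $j \notin I'$. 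This is the direct analogue of the rule $L_i \mapsto L_i + i \epsilon L_{i+1}$ from the proof of Lemma \ref{lines-linearpart}. Under the hypothesis $d \leq p+1$, the coefficient $(d - |I'|)$ is nonzero modulo $p$ as soon as $|I'| \geq 2$, which is always the case in the step below.

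Next I would proceed by contradiction. Since the existence of a downward subset $T' \subset T \setminus \{\emptyset\}$ indexing a basis of $V := \operatorname{span}\{L(c_I): I \in T \setminus \{\emptyset\}\}$ is a Zariski-open condition on $\Lambda$ (it is the union, over the finitely many downward subsets of $T$, of individually open conditions), by the generality of $\Lambda$ it suffices to exhibit one $(k-1)$-plane $\Lambda^* \subset \Phi$ at which such a $T'$ exists. Suppose the conclusion fails at $\Lambda$. Choose a downward $T'$ at $\Lambda$ that is \emph{saturated}, meaning no element may be adjoined to $T'$ while preserving both downwardness and independence. Since $T'$ fails to span $V$, pick $I_0 \in T \setminus T'$ with $L(c_{I_0}) \notin \operatorname{span}(T')$ and $|I_0|$ maximal. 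A short combinatorial argument using saturation shows $|I_0| \leq d-2$ and produces some $J^* = I_0 \cup \{j\}$ with $J^* \notin T'$ and $L(c_{J^*}) \in \operatorname{span}(T')$ (the latter by maximality of $|I_0|$). Applying the transformation rule with this $j$, the deformed form $L(c'_{J^*}) = L(c_{J^*}) + \epsilon (d - |J^*|)\, L(c_{I_0}) + O(\epsilon^2)$ acquires a nonzero order-$\epsilon$ contribution from $L(c_{I_0})$, which does not lie in $\operatorname{span}(T')$. A careful analysis then shows that at the deformed plane $\Lambda^*$, the subset obtained from $T'$ by adjoining $J^*$ together with whatever remains of its upward closure is larger, downward, and still independent. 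Iterating this straightening terminates in a downward basis at some $\Lambda^* \subset \Phi$, contradicting our initial assumption.

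The principal obstacle is the bookkeeping in the iterative step: tracking the partition $T = T' \sqcup P^- \sqcup P^+$, where $P^\pm$ are the multisets outside $T'$ whose linear forms $L(c_I)$ lie outside, respectively inside, $\operatorname{span}(T')$, and verifying that the adjunction of $J^*$ (or of a larger subset containing its upward closure) genuinely preserves downwardness at the deformed plane. This is the analogue of the step ``$L_{j-1}$ fills the gap'' in the proof of Lemma \ref{lines-linearpart}, but the multiset poset underlying $T$ is richer than the one-dimensional chain, so a first-order deformation in a single direction $j$ may not always be decisive, and one may need either to combine deformations in several $j$'s simultaneously or to exploit higher-order terms from the transformation formula. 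The characteristic-$p$ hypothesis $d \leq p+1$ enters solely through the nonvanishing of $(d - |J^*|)$ modulo $p$, exactly paralleling the role of $j \leq p$ in the lines case.
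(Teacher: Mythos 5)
Your setup matches the paper's: the transformation rule $L(c_I)\mapsto L(c_I)+\epsilon(d-|I|)L(c_{I\setminus\{j\}})$ for $j\in I$, the role of $d\leq p+1$ in keeping the coefficient $d-|J^*|$ nonzero mod $p$, and the strategy of deforming $\Lambda$ inside $\Phi$ and contradicting its generality. The combinatorial step is also sound: saturation forces $|I_0|\leq d-2$ and produces $J^*=I_0\cup\{j\}$ with $J^*\notin T'$ and $L(c_{J^*})\in\operatorname{span}\{L(c_I):I\in T'\}$ by maximality of $|I_0|$.

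The gap is exactly where you flag it, and it is not mere bookkeeping: as written, the argument does not show that any rank actually jumps. Under the deformation the forms indexed by $T'$ move too, and since a downward set is closed under \emph{adjoining} indices rather than deleting them, $I\setminus\{j\}$ need not belong to $T'$ and $L(c_{I\setminus\{j\}})$ need not lie in $\operatorname{span}(T')$. Writing $L(c_{J^*})=\sum_{I\in T'}\alpha_I L(c_I)$, the first-order obstruction to $L(c_{J^*})'$ staying inside the span of the deformed $T'$ is
$$(d-|J^*|)\,L(c_{I_0})\;-\;\sum_{I\in T',\,j\in I}\alpha_I\,(d-|I|)\,L(c_{I\setminus\{j\}})\pmod{\operatorname{span}(T')},$$
and the sum can a priori cancel the $L(c_{I_0})$ term. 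In the lines case this cannot happen: there the downward set is an initial segment $L_1,\dots,L_i$, and every $L_{m+1}$ with $m\leq i$ already lies in $\operatorname{span}(L_1,\dots,L_i)$ by maximality, so only the new direction survives. For $k\geq 2$ the multiset poset gives no such control; this is precisely what makes this lemma harder than Lemma \ref{lines-linearpart}, and your proposal defers it to ``a careful analysis'' and to possibly combining several directions $j$ or higher-order terms without carrying any of that out. (The same issue affects your final claim that adjoining $J^*$ ``together with whatever remains of its upward closure'' stays independent: the deformation, even granting the cancellation does not occur, controls only $L(c_{J^*})$, not the forms indexed by the other elements you must add to restore downwardness.) The paper organizes the deformation differently --- it fixes a basis $T_1$ of the whole span, takes $T_2$ the largest downward subset of $T_1$, and deforms at a $J\in T_1$ of maximal size with $J\cup\{l\}\notin T_1$ --- but under either bookkeeping the cancellation analysis is the step that carries the content of the lemma, and your proposal does not supply it. A secondary, fixable point: ``some downward $T'$ indexes a basis of $V$'' is an open condition only on the locus where $\dim V$ is locally constant, so your reduction to exhibiting a single $\Lambda^*$ requires $\Lambda^*$ to stay in that locus; since your $\Lambda^*$ is a generic small deformation of a general $\Lambda$, this is fine once stated.
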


\begin{proof}
     If we deform $\Lambda$ to $$V(x_k-\sum_{i=0}^{k-1}a_ix_i,x_{k+1},...,x_n),$$ we can preserve the choice of coordinates by applying the transformation $\varphi$ which modifies $x_k$ as $x_k\to x_k+\sum_{i=0}^{k-1}a_ix_i$. Let $L(c_I)$ be the linear part of $c_I$ in its expansion around $\Phi$. 

Let $T_1\subset T\setminus \emptyset$ be a subset such that $\{L(c_I)|I\in T_1\}$ forms a basis for the span of $\{L(c_I)|\emptyset \neq I\in T\}$. Let $T_2\subset T_1$ be a downward subset with the largest possible size among all downward subsets of $T_1$. If $T_1$ is not a downward set, then there must exist some $J\in T_1$ and some $l\in \{0,1,...,k-1\}$ such that 
$L(c_J)$ is linearly independent of $\{L(c_I)|I\in T_2\}$, but $L(c_{J\cup \{l\}})$ is linearly dependent on $\{L(c_I)|I\in T_1\}$.

Choose $J$ as above such that $|J|$ is as large as possible. Note that by our assumption $J \neq \emptyset$ so $|J| \geq 1$. Now, deform $\Lambda$ using the change of coordinates $x_k\to x_k+\epsilon x_l$. Under this deformation, we have $$L(c_I)\to L(c_I)+\epsilon(d-|I|)L(c_{I\setminus \{l\}})$$ if $l \in I$, and $$L(c_I)\to L(c_I)$$ otherwise. Since $|J|\geq 1$, $d-|J| \neq 0$ mod $p$, so under this deformation we see that $L(c_{J\cup \{l\}})$ will become independent of $\{L(c_I)|I\in T_1\}$, contradicting the generality of $\Lambda$.  Thus, $T_1$ must be a downward set.
\end{proof}

Let $S$ be the incidence correspondence 
$$S=\{(\lambda, q) \; | \; \lambda \in \D, \; q \in \mathcal D_{X,\lambda}, \; q {\text{ is a singular point of }} \mathcal D_{X,\lambda} \} \subset \D \times \PP^n,$$
where $\mathcal D_{X,\lambda}$ is the vanishing locus of the partial derivative corresponding to $\lambda$.

In characteristic $0$, since $\D$ is base-point-free, it satisfies the strong Bertini theorem. Hence, the codimension of the locus in $\D$ parameterizing hypersurfaces which are singular in dimension $\geq t$, is at least $t+1$. Therefore, $\dim S \leq  n-1$. In characteristic $p$, however, this no longer holds: for example, in the case of the Fermat hypersurface of degree $p+1$, we have $\dim S = 2n-1$. In what follows, we show that the space of $k$-planes has the expected dimension when $d \leq p$, or $d=p$ and $\dim S$ is not too large. 

\begin{lemma}\label{k-planes}
    Suppose that $d \leq p$, or $d=p+1$ and $$\dim S \leq 2n-2k-2{d+k-1 \choose k }-2.$$ Let $F$ be an irreducible component of $F_k(X)$, $\Phi$ a general $k$-plane parametrized by $F$, and $\Lambda$ a general $(k-1)$-plane in $\Phi$. Then $F\cap F^{\Lambda}(X)$ has the expected dimension $n-k-{d+k-1\choose k}$ at $[\Phi]$. 
 \end{lemma}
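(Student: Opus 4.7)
The plan is to show that the linearizations $\{L(c_I) : I \in T\}$ at the point $q = \Phi \cap P$ are linearly independent as linear forms on $T_q P$. With $|T| = \binom{d+k-1}{k}$, such linear independence gives $\dim T_{[\Phi]} F^{\Lambda}(X) = n-k-|T|$, which together with the codimension lower bound $\dim F^{\Lambda}(X) \geq n-k-|T|$ (coming from $F^{\Lambda}(X) \subset P \cong \PP^{n-k}$ being cut out by the $|T|$ polynomials $c_I$) forces $\dim_{[\Phi]} F^{\Lambda}(X) = n-k-|T|$. Because $\Phi$ is a general point of the irreducible component $F$, the variety $F \cap F^{\Lambda}(X)$ agrees with $F^{\Lambda}(X)$ locally near $[\Phi]$, yielding the desired expected-dimension statement.

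I would assume for contradiction that $\rank\{L(c_I) : I \in T\} < |T|$ and split into cases according to whether $L(c_\emptyset)$ lies in the span of $\{L(c_I) : I \in T \setminus \{\emptyset\}\}$. In the dependent case, since $\Lambda$ is general the linear subspace $P = V(x_0, \ldots, x_{k-1})$ is a general $(n-k)$-plane in $\PP^n$, so $V(c_\emptyset) = X \cap P$ is non-singular by Bertini. Lemma \ref{Lemma2.1} applied with $h = c_\emptyset$ (degree $d$, $s = -1$) and the $|T|-1$ polynomials $c_I$ for $I \neq \emptyset$ (of degrees strictly less than $d$) then shows that the locus of $q$ in $V(c_\emptyset)$ where this tangency holds has dimension at most $|T| - 2$. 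This forces $\dim_{[\Phi]} (F \cap F^{\Lambda}(X)) \leq |T| - 2$, contradicting the lower bound $\dim_{[\Phi]}(F \cap F^{\Lambda}(X)) \geq n - k - |T|$ in the relevant range of $n$.

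In the independent case, the rank drop happens entirely within $\{L(c_I) : I \neq \emptyset\}$, so Lemma \ref{downward} provides a proper downward basis $T' \subsetneq T \setminus \{\emptyset\}$; since $T'$ is downward and proper, some singleton $\{l\} \notin T'$ and hence $L(c_{\{l\}})$ lies in $\mathrm{span}\{L(c_I) : I \in T'\}$. For $d \leq p$, I would use the deformation $x_k \mapsto x_k + \epsilon x_l$, which via the formula of Lemma \ref{downward} sends $L(c_{\{l\}})$ to $L(c_{\{l\}}) + \epsilon(d-1)L(c_\emptyset)$ with $d - 1 \not\equiv 0 \pmod{p}$, while every $L(c_I)$ for $I \in T'$ (which all have $|I| \geq 1$) transforms within $\mathrm{span}\{L(c_J) : J \neq \emptyset\}$ (since $|I \setminus \{l\}| \geq 1$ for the only relevant $I$'s). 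Matching the $\epsilon$-coefficients of the putative persistent relation then forces $L(c_\emptyset)$ to lie in $\mathrm{span}\{L(c_J) : J \neq \emptyset\}$, contradicting the independent case. For $d = p+1$ the argument breaks because $d - 1 \equiv 0 \pmod{p}$; instead, I would subtract multiples of $L(c_{\{l'\}})$ for $\{l'\} \in T'$ to eliminate the singleton contributions, reducing the relation to $L(\lambda|_P) \in \mathrm{span}\{L(c_I) : I \in T', |I| \geq 2\}$ for some nonzero $\lambda \in \D$. This forces $\lambda|_P$ to be singular at $q$, contributing a point of the incidence $S$. The hypothesis $\dim S \leq 2n - 2k - 2|T| - 2$ then bounds the dimension of such configurations, and comparing against the dimension $\dim F + k$ of the incidence $\{(\Phi, \Lambda) : \Lambda \subset \Phi \in F\}$ (using the expected-dimension lower bound on $\dim F$) delivers the required contradiction.

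The principal obstacle I expect is the $d = p+1$ subcase of the independent case: one must identify the element $\lambda \in \D$ precisely and carry out a careful dimension count of the incidence of quadruples $(\Phi, \Lambda, \lambda, q)$ so that the numerical constant $2n - 2k - 2\binom{d+k-1}{k} - 2$ in the hypothesis on $\dim S$ emerges exactly from the bookkeeping. The other subcases are essentially multi-index generalizations of the two cases appearing in the proof of Theorem \ref{nonfree}.
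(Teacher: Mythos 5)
Your overall architecture is the paper's: reduce to linear independence of the $\{L(c_I)\}$, invoke Lemma \ref{downward} to get a downward basis $T'$, split according to whether $\emptyset$ or a singleton is the missing index, and observe that a missing singleton can only occur when $d=p+1$, in which case the hypothesis on $\dim S$ must enter. The $d\leq p$ reduction via the deformation $x_k\mapsto x_k+\epsilon x_l$ and the coefficient $d-1\not\equiv 0\pmod p$ is also essentially what the paper does. But there is a genuine gap at the heart of the $d=p+1$ case. The relation you arrive at, $L(g_a)\in\mathrm{span}\{L(c_I): |I|\geq 2\}$ with $g_a=\bigl(\sum_j a_j\,\partial f/\partial x_j\bigr)|_P$, does \emph{not} force $g_a$ to be singular at $q$: singularity at $q$ would mean $L(g_a)=0$, whereas the relation only says that $V(g_a)$ is \emph{tangent} to $V(c_I:|I|\geq 2)$ at $q$. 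A smooth member of $\D$ can perfectly well be tangent to that complete intersection at a point, so no point of $S$ is produced and the hypothesis on $\dim S$ never gets any traction. The missing idea is Lemma \ref{Lemma2.1}: the tangency holds not just at one $q$ but at every point of the locus in $P$ swept out by $F\cap F^{\Lambda}(X)$, which has dimension at least $n-k-\binom{d+k-1}{k}$; since $V(c_I:|I|>1)$ is cut out by $\binom{d+k-1}{k}-k-1$ polynomials of degree $<d$, Lemma \ref{Lemma2.1} then forces the singular locus of $V(g_a)$ to be large. One must further compare $\mathrm{Sing}(V(g_a))$ with $\mathrm{Sing}(\mathcal D_{X,\lambda})$ (the restriction to $P$ costs at most $k$ in dimension --- note $S$ records singular points of partial derivatives in $\PP^n$, not of their restrictions to $P$), and finally use the generality of the coordinates to sweep the resulting bound over all $(k-1)$-planes in $\PP\D$, yielding $\dim S\geq 2n-2k-2\binom{d+k-1}{k}+2$ and the contradiction. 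None of this chain is present in your sketch, and the mechanism you state in its place is false.

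A secondary issue: in the case where $L(c_{\emptyset})$ is dependent on the other linear parts, your Bertini-plus-Lemma-\ref{Lemma2.1} count gives a contradiction only when $n-k-\binom{d+k-1}{k}>\binom{d+k-1}{k}-2$, a largeness condition on $n$ that is not among the hypotheses of the lemma (in the $d\leq p$ case there is no numerical hypothesis at all); the paper instead derives a contradiction with the smoothness of $X$ following \cite[Theorem 3.7]{BR21}. This is harmless where the lemma is actually applied (the corollary assumes $n\geq 2\binom{d+k-1}{k}+2k-2$), but you should at least flag the dependence on $n$ rather than fold it into ``the relevant range.''
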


 \begin{proof}
     We work in coordinates where $\Lambda$ is given by $V(x_{k},...,x_n)\subset \mathbb{P}^n$. If the tangent space $T_{\Phi}F^{\Lambda}(X)$ has the expected dimension, then we are done. So, assume instead that the dimension of $T_{\Phi}F^{\Lambda}(X)$ is larger than expected.
     
     Expanding $f$ around $\Lambda$ as before, the assumption that $\dim T_{\Phi}F^{\Lambda}(X)$ is larger than expected implies the set $\{L(c_I)| \: I \in T \}$ is not linearly independent. By Lemma \ref{downward}, there is a downward subset 
     $T' \subset T$ such that $\emptyset\not\in T'$, and $\{L(c_I) | \: I \in T' \}$ form a basis for the linear span of $\{L(c_{I}), I\in T \setminus \{ \emptyset \} \}$.
     
     If $T = T' \cup \emptyset$, we get a contradiction: by our assumption the set $\{L(c_{I}) \; | \; I \in T\}$ is not linearly independent, so $L(c_{\emptyset})$ is linear combination of the set of $L(c_{I}), I \neq \emptyset$. This contradicts that $X$ is non-singular in the same way as in \cite[Theorem 3.7]{BR21}. Hence, we may assume that there exists some $m\in \{0,..,k-1\}$ such that $\{m\} \not\in T'$. By the proof of Lemma \ref{downward}, this can happen only if $d=p+1$. In this case $L(c_{\{m\}})$ is a linear combination of $\{L(c_I)|I\in T',  I \neq \{m\} \})$. Consequently, there is a linear combination $g$ of $c_{\{0\}}, \dots, c_{\{k-1\}}$ such that $L(g)$ is in the span of $\{L(c_I)\;|\; |I|>1\}$. Therefore, $V(g)$ is tangent to $V(c_I, |I|>1)$.

     Since $\Phi$ and $\Lambda$ were taken to be general, the same argument can be applied to any general $\Phi'$ parametrized by $F$ which contains $\Lambda$. 
    Given a point $a=(a_0:\dots:a_{k-1})$ in $\PP^{k-1}$, let $g_a=a_0c_{\{0\}}+\dots+a_{k-1}c_{\{k-1\}}$, which is a homogeneous polynomial in $x_{k},\dots, x_{n}$.
     Consider $\Sigma \subset (F^{\Lambda}(X) \cap F) \times \PP^{k-1}$ given by 
     $$ \Sigma=\{(\Phi',a)|V(g_a) \text{ is tangent to } V(c_I, |I|>1) \text{ at }\Phi'\}.$$ Let $\pi_1,\pi_2$ be the two projections to the first and second factors. Then for $a \in \PP^{k-1}$, $\pi_2^{-1}(a)$ is the locus of points at which $V(g_a)$ does not meet the intersection of $V(c_{I}, |I| >1)$ transversally.
     
     By our assumptions, $\pi_1$ is dominant, and since $\dim F \cap F^{\Lambda}$ is at least $n-k-{d+k-1 \choose k }$, we get that  $$\dim \Sigma \geq n-k-{d+k-1 \choose k}.$$ Choose an irreducible component $\Sigma_0$ of $\Sigma$ with dimension $\geq n-k-{d+k-1 \choose k}$, and denote by $R$ the image of $\Sigma_0$ under $\pi_2$. Then for a general $a \in R$, 
     $$ \dim \pi_2^{-1}(a) \geq n-k-{d+k-1 \choose k} -\dim R.$$ Note that for each $a$ as above, $g_a$ is the restriction of a partial derivative of $f$: $$g_a = \left(\sum_{j=0}^{k-1} a_j \frac{\partial f}{\partial x_j} \right)|_{\{x_0=\dots=x_{k-1}=0\}}.$$
     Hence, if the singular locus of $\sum_{j=0}^{k-1} a_j \frac{\partial f}{\partial x_j}$ has dimension $m$, then the singular locus of $g_a$ has dimension $\leq k+m$. Since $\{\emptyset \neq I\in T, |I| >1\}$ has size ${d+k-1 \choose k}-k-1$, by \cite[Lemma 1.2]{BR21}, $$\dim \pi_2^{-1}(a) \leq m+k+{d+k-1 \choose k}-k-1.$$Comparing the above two inequalities, we get  
     $$ \dim R + m \geq n-k-2{d+k-1 \choose k}+1.$$
     This means that if we consider the $(k-1)$-plane in $\PP\D$ spanned by partial derivatives $\frac{\partial f}{\partial x_0}, \dots, \frac{\partial f}{\partial x_{k-1}}$, then the inverse image of this $(k-1)$-plane under $\pi: S \to \PP\D$ has dimension at least $n-k-2{d+k-1 \choose k}+1$.
     
     Since $x_0, \dots, x_{k-1}$ are general coordinates, this implies that for any $(k-1)$-plane $Z$ in $\PP \D$, $$\dim \pi^{-1}(Z) \geq n-k-2{d+k-1 \choose k}+1.$$ Consequently,  
     $$\dim S \geq n-k-2{d+k-1 \choose k} + 1 + (n-k+1),$$ a contradiction.
 \end{proof}

\begin{corollary} Suppose $n \geq 2 {d+k-1 \choose k} +2k-2$, and $X$ is a non-singular hypersurface of degree $d$ in $\PP^n$ such that $d\leq p$, or $d=p+1$ and $\dim S \leq 2n-2k-2{d+k-1\choose k}+1$.  Then $F_k(X)$ has the expected dimension $(k+1)(n-k)-{d+k\choose k}$.
\end{corollary}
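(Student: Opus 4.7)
The plan is to argue by induction on $k$. The base case $k=0$ is immediate, since $F_0(X) = X$ has dimension $n-1$, matching the expected formula. For the inductive step, I assume $F_{k-1}(X)$ is of the expected dimension $k(n-k+1) - \binom{d+k-1}{k-1}$, and take an arbitrary irreducible component $F$ of $F_k(X)$. Only the upper bound $\dim F \leq (k+1)(n-k) - \binom{d+k}{k}$ requires proof; the matching lower bound is automatic from the tangent space estimate recalled at the start of the section.

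The key tool is the flag incidence correspondence
$$I_F \;=\; \{(\Lambda, \Phi) \in F_{k-1}(X) \times F \,:\, \Lambda \subset \Phi\}.$$
The projection $I_F \to F$ is a Grassmannian bundle whose fibers parameterize $(k-1)$-planes inside a $k$-plane, hence are isomorphic to $\PP^k$; thus $I_F$ is irreducible of dimension $\dim F + k$. Let $B \subset F_{k-1}(X)$ denote the closure of the image of the other projection $\pi_1 \colon I_F \to F_{k-1}(X)$. The fiber $\pi_1^{-1}(\Lambda)$ is identified with $F \cap F^\Lambda(X)$, and a general point of $I_F$ corresponds to a general $\Phi \in F$ together with a general $(k-1)$-plane $\Lambda \subset \Phi$.

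Under the hypotheses of the corollary, Lemma \ref{k-planes} applies at level $k$ and tells me that at such a general point, the local dimension of $F \cap F^\Lambda(X)$ at $[\Phi]$ equals $n - k - \binom{d+k-1}{k}$. Since $I_F$ is irreducible and $\pi_1$ is dominant onto $B$, this local fiber dimension at a general point of $I_F$ agrees with the generic fiber dimension $\dim I_F - \dim B$, so
$$\dim F + k - \dim B \;=\; n - k - \binom{d+k-1}{k}.$$
Feeding in the inductive bound $\dim B \leq \dim F_{k-1}(X) = k(n-k+1) - \binom{d+k-1}{k-1}$ and Pascal's identity $\binom{d+k-1}{k-1} + \binom{d+k-1}{k} = \binom{d+k}{k}$ produces the desired upper bound $\dim F \leq (k+1)(n-k) - \binom{d+k}{k}$.

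The one point that really needs attention is verifying that the hypotheses of Lemma \ref{k-planes} persist through the induction, i.e.\ they hold at every level $k' \leq k$, not only at $k' = k$. This is a direct monotonicity check: both $\binom{d+k'-1}{k'}$ and $k'$ are increasing in $k'$, so the conditions $n \geq 2\binom{d+k'-1}{k'} + 2k' - 2$ and the upper bound on $\dim S$ become more restrictive as $k'$ grows. Consequently the top-level hypothesis at $k' = k$ automatically supplies the hypotheses of the lemma (and of the corollary at each smaller level) needed throughout the induction, so the argument runs through without further incident.
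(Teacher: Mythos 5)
Your proposal is correct and follows essentially the same route as the paper: the same induction on $k$, the same flag incidence correspondence $I=\{(\Lambda,\Phi):\Lambda\subset\Phi\}$ with its $\PP^k$-bundle projection to $F$, and the same appeal to Lemma \ref{k-planes} to control the generic fiber of the other projection, combined with Pascal's identity. The only cosmetic differences are that you anchor the induction at $k=0$ rather than at $k=1$ (where the paper invokes the results of Section \ref{sec-lines}) and that you make explicit the monotonicity check on the hypotheses, which the paper leaves implicit.
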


\begin{proof} 
We prove the statement by induction on $k$. For $k=1$, the results were proved in the previous section. Suppose that the statement holds for $k-1$. Let $F$ be an irreducible component of $F_k(X)$, and consider the incidence correspondence 
$$ I=\{(\Lambda, \Phi) \; | \; \Lambda \subset \Phi, [\Lambda] \in F_{k-1}(X), [\Phi] \in F\}.$$
Let $\pi_1, \pi_2$ be the projections to the first and second factors, respectively. Since $F$ is irreducible and the fibers of $\pi_2$ are projective spaces, the variety $I$ is irreducible, hence $\pi_1(I)$ is irreducible. By Lemma \ref{k-planes}, for a 
general $[\Lambda]$ in $\pi_1(I)$, the fiber $\pi_1^{-1}([\Lambda])$ has the expected dimension $n-k-{d+k-1\choose k}$. Therefore, 
\begin{equation*}
\begin{split}
    \dim I & \leq n-k-{d+k-1\choose k} + \dim F_{k-1}(X) \\
    & =  n-k-{k+d-1\choose k} + k(n-k+1)-{d+k-1\choose k-1} \\
    & = k(n-k)+n-{d+k\choose k}
    \end{split}
\end{equation*}
Since the fibers of $\pi_2$ have dimension $k$, we get 
$$\dim F \leq (k+1)(n-k)-{d+k\choose k}.$$ 
Since the dimension of every component of $F_k(X)$ is at least the expected dimension, we conclude that $F$ has the expected dimension. 

\end{proof}

\subsection{Linear sections of $X$}
Studying linear sections of a hypersurface is useful for understanding free rational curves. For instance, if a linear section of a hypersurface contains a free rational curve, then that curve must be free in the original hypersurface. Thus, it is useful to know the possible linear sections of a hypersurface. Some hypersurfaces (such as the Fermat in certain characteristics) have very few distinct linear sections \cite{Beauville}. In this section we show that hypersurfaces $X$ with $F_k(X)$ having the expected dimension (such as those described in the previous section) have very favorable properties for their linear sections.

In particular, if $F_k(X)$ has the expected dimension, then any hypersurface in $\PP^k$ can be realized as a linear section of $X$. The proof is due to Jason Starr, but to our knowledge does not appear in a published paper; a weaker version appears in an arxiv preprint by Starr \cite{starr-arxiv}, and its ideas have been used in other places such as \cite{ER24}. For the reader's convenience, we reproduce the proof of the stronger result below.

Let $G(k,n)$ denote the Grassmannian of $k$-planes in $\PP^n$, and $\PP^{N_d}$ the projective space parametrizing hypersurfaces of degree $d$ in $\PP^k$. Consider the rational map
$$G(k,n) \dashrightarrow \PP^{N_d}// \PGL_{k+1}$$
Starr \cite{starr-arxiv} shows that if $n \geq {d+k-1 \choose k}+k-1$, the above map is dominant. In fact, we can obtain every hypersurface in $\PP^k$ as a linear section of $X$.

\begin{proposition}[Starr \cite{starr-overflow}] \label{prop-linearSections}
    Suppose $X$ is a (possibly singular) hypersurface of degree $d$ in $\PP^n$ over an algebraically closed field of arbitrary characteristic such that $F_k(X)$ has the expected dimension $(k+1)(n-k)-\binom{k+d}{k} \geq 0$.
    Then for any hypersurface $Y$ of degree $d$ in $\PP^k$, there is a $k$-plane section of $X$ that is isomorphic to $Y$. 
\end{proposition}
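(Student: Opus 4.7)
The plan is to recast the statement as surjectivity of a polynomial restriction map, and to get surjectivity from miracle flatness at the fiber over $0$ combined with scaling symmetry. Fix a defining polynomial $f$ of $X$, and let $V$ be the affine variety of linear maps $\tilde{\phi}\colon \mathbb{A}^{k+1}\to \mathbb{A}^{n+1}$ of rank $k+1$, so $\dim V=(k+1)(n+1)$. Consider the restriction morphism
\[ F\colon V\longrightarrow \mathbb{A}^{N_d+1},\qquad \tilde{\phi}\longmapsto f\circ \tilde{\phi}, \]
landing in the affine space of degree $d$ homogeneous polynomials in $k+1$ variables. Then $F$ is $\GL_{k+1}$-equivariant (via composition on both sides) and satisfies $F(\lambda \tilde{\phi})=\lambda^d F(\tilde{\phi})$. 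The fiber $F^{-1}(0)$ is the set of $\tilde{\phi}$ whose image lies in $X$; the forgetful map $F^{-1}(0)\to F_k(X)$ is a $\GL_{k+1}$-torsor, so the hypothesis that $F_k(X)$ has the expected dimension yields
\[ \dim_{\tilde{\phi}}F^{-1}(0)=\dim_{[\mathrm{Im}\,\tilde{\phi}]} F_k(X)+(k+1)^2=(k+1)(n+1)-(N_d+1) \]
at \emph{every} point of $F^{-1}(0)$, matching the expected relative dimension of $F$.

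Next I would apply miracle flatness: $V$ is smooth (hence Cohen-Macaulay), $\mathbb{A}^{N_d+1}$ is regular, and the pointwise fiber-dimension identity above forces $F$ to be flat at each point of $F^{-1}(0)$. Let $U\subset V$ be the (open) flat locus of $F$; then $F^{-1}(0)\subset U$, and since the scaling action of $\mathbb{G}_m$ on $V$ is equivariant with respect to the weight-$d$ scaling on $\mathbb{A}^{N_d+1}$, the set $U$ is $\mathbb{G}_m$-invariant. Flat morphisms of finite type being open, $F(U)$ is an open subset of $\mathbb{A}^{N_d+1}$ containing $0$ and invariant under $g\mapsto \lambda^d g$; since $\lambda\mapsto \lambda^d$ is surjective on $\mathbb{G}_m$ over an algebraically closed field, $F(U)$ is invariant under all scalar multiplications. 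Then the complement $\mathbb{A}^{N_d+1}\setminus F(U)$ is a closed, scaling-invariant subset avoiding $0$; any nonempty closed scaling-invariant subset of an affine space is a cone and must contain $0$, so the complement is empty and $F$ is surjective.

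With $F$ surjective, every degree $d$ hypersurface $Y=V(g)\subset \PP^k$ is realized: any $\tilde{\phi}\in F^{-1}(g)$ yields a $k$-plane $\Lambda=\PP(\mathrm{Im}\,\tilde{\phi})\subset \PP^n$ with $X\cap \Lambda \cong Y$ under $\tilde{\phi}$. The key technical point to handle carefully is that the hypothesis on $F_k(X)$ must yield the fiber-dimension identity at \emph{every} point of $F^{-1}(0)$, not only generically, so that miracle flatness applies pointwise; this is automatic since the expected dimension is always a local lower bound on $\dim F_k(X)$, forcing global equality of dimensions to be pointwise equality. The argument is characteristic-free, as required by the proposition.
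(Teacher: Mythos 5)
Your proof is correct, and it takes a genuinely different route from the paper's. The paper fixes the equation of $Y$ and varies the ambient hypersurface: it forms the space $\PP V$ of degree-$d$ polynomials on $\PP^n$ restricting to a multiple of that equation on a fixed $k$-plane, maps $\PP V \times \GL(n+1)$ to the space of all hypersurfaces by change of coordinates, and compares the dimension of the fiber over $[X]$ (controlled by $\dim F_k(X)$) with the fiber over a general hypersurface $[Z]$ in the image, deriving a contradiction with the minimality of the expected dimension if the fiber over $[X]$ sat entirely inside the divisor $\PP W \times \GL(n+1)$. You instead fix $X$ and vary the target form: the central fiber $F^{-1}(0)$ of your restriction map is a $\GL_{k+1}$-bundle over $F_k(X)$, the hypothesis (together with the standard fact, stated in the paper, that every component of $F_k(X)$ has at least the expected dimension, so global equality of dimensions is pointwise equality) gives the exact fiber-dimension identity needed for miracle flatness, and openness of flat maps plus the weight-$d$ $\mathbb{G}_m$-equivariance upgrades ``$0$ lies in the open image'' to surjectivity, since a nonempty closed $\mathbb{G}_m$-invariant subset of $\mathbb{A}^{N_d+1}$ must contain the origin. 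Your version buys a cleaner set of inputs: you never need the auxiliary fact, invoked in the paper to make $\phi'$ surjective, that \emph{every} degree-$d$ hypersurface in $\PP^n$ contains a $k$-plane when the expected dimension is nonnegative; nonemptiness of $F_k(X)$ itself (implicit in the hypothesis) suffices. The paper's incidence-correspondence argument, in exchange, situates $X$ among all hypersurfaces and produces the comparison $\dim F_k(X) \geq \dim F_k(Z)+1$ in the degenerate case, which is occasionally useful elsewhere. Both arguments are characteristic-free and apply to singular $X$, as the statement requires.
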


The bound on $n$ ensures that every hypersurface of degree $d$ in $\PP^n$ contains a $k$-plane.
\begin{proof}
Fix the $k$-plane $\Lambda=V(x_{k+1},\dots,x_n)$ in $\PP^n$, and let $Y=V(f) \subset \Lambda$. Let 
$V$ be the vector space of all polynomials $g$ of degree $d$ in $x_0,\dots,x_n$ such that 
$g|_{\Lambda}=cf$ for some constant $c$ (possibly zero). Let $W\subset V$ be the subspace of those polynomials whose restriction to $\Lambda$ is zero. Then $\PP W$ is a divisor in $\PP V$.

Let $H_{n,d}$ denote the projective space of hypersurfaces of degree $d$ in $\PP^n$, and let $$\phi: \PP V \times \GL(n+1) \to H_{n,d}$$ be the morphism given by change of coordinates. 
Denote by $\phi'$ the restriction of $\phi$ to the divisor $\PP W \times \GL(n+1)$. 
Any point in the fiber of $\phi'$ over $[X]$ can be identified with a $k$-plane contained in $X$ along with the choice of an automorphism of $\PP^n$ sending $\Lambda$ to the $k$-plane. Hence 
$$\dim \phi'^{-1}([X]) = \dim F_k(X) + (n+1)^2-(k+1)(n-k) = t.$$

Since
$n$ satisfies the given bound, every hypersurface of degree $d$ in $\PP^n$ contains a $k$-plane. Hence $\phi'$ is surjective, and consequently $\phi$ is also surjective. The restriction of
$\phi$ to $\PP V \times \GL(n+1) \setminus \PP W \times \GL(n+1)$ is therefore dominant. 

Let $[Z]$ be a general point in the image of $\PP V \times \GL(n+1) \setminus \PP W \times \GL(n+1)$. Then the fiber of $\phi'$ over $[Z]$ has dimension $t$, so the fiber of $\phi$ over $[Z]$ has dimension $t+1$.

Now, if the fiber of $\phi$ over $[X]$ is not contained in $\PP W \times \GL(n+1)$, then there exists a $k$-plane not contained in $X$ whose intersection with $X$ is isomorphic to $Y$. Suppose instead that the fiber of $\phi$ over $[X]$ is entirely contained in $\PP W \times \GL(n+1)$. Then the dimension
of the fiber of $\phi'$ over $[X]$ is equal to the dimension of the fiber of $\phi$ over $[X]$ which is at least $t+1$, therefore,
$$\dim F_k(X) \geq \dim F_k(Z)+1,$$
a contradiction.
\end{proof}

The above proposition does not hold for hypersurfaces in the given degree range if $F_k(X)$ does not have the expected dimension. For example, for the Fermat hypersurface of degree $d=p+1$, the non-singular $k$-plane sections are all isomorphic to the Fermat hypersurface of degree $d$ in $\PP^{k}$. (This extreme case occurs only for Fermat hypersurfaces. See \cite{Beauville} and \cite{OpstallVeliche} for more details about linear sections of hypersurfaces.)

One obstruction to having a large family of linear sections arises from the fact that the codimension of the singular locus of a general partial derivative does not decrease when restricted to a general $k$-plane section. A general hypersurface has non-singular general partial derivatives. Therefore, a general hypersurface $Y$ of degree $d$ cannot be isomorphic to a $k$-plane section of $X$ if, for example, a general member of $\D$ is singular in dimension $\geq n-k$.

Since the dimension of the singular locus of general partial derivatives is largest for Fermat hypersurfaces, we ask the following question:

\begin{question} \label{ques:FermatSection}
    Over an algebraically closed field of arbitrary characteristic and given some degree $d$ and dimension $k$, is there an $N$ such that for every $n \geq N$, a non-singular hypersurface of degree $d$ in $\PP^n$ must have the Fermat hypersurface in $\PP^k$ as a $k$-plane section?
\end{question}


The answer to this question is positive in characteristic $0$ by Proposition \ref{prop-linearSections}, since for sufficiently large $n$, $F_k(X)$ has the expected dimension.

In characteristic $p$, Question \ref{ques:FermatSection} is open, and answering it would have important consequences for understanding the separable rational connectedness of high-dimensional hypersurfaces. Assume that, for given $d$ and $p$ with $p \nmid d$, the answer to Question \ref{ques:FermatSection} is positive for some $k \gg d$. Then we conclude that in characteristic $p$, every hypersurface $X$ of degree $d$ and sufficiently large dimension (relative to $d$) is separably rationally connected. To prove this, it is enough to show that the Fermat hypersurface of degree $d$ in $\PP^k$ is separably rationally connected if $k$ is sufficiently large relative to $d$. This was shown in Corrolaire 3.18 of \cite{conduche} when $d \mid p^r+1$ for some $r \geq 1$.

\begin{proposition}
    In characteristic $p \nmid d$, the Fermat hypersurface $X_0$ of degree $d$ in $\PP^k$ is separably rationally connected for $k \geq 2d+1$. Thus, if $X_0$ is a $k$-plane section of a hypersurface $X$ in $\PP^n$, then $X$ is also separably rationally connected.
\end{proposition}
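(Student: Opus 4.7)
The plan is to treat the two claims of the proposition separately. For the separable rational connectedness of the Fermat $X_0 = V(x_0^d + \cdots + x_k^d)$ in $\PP^k$, the hypothesis $p \nmid d$ first guarantees smoothness, since the partial derivatives $d\,x_i^{d-1}$ have no common zero on $X_0$. If $p \geq d+1$, the bound $k \geq 2d+1$ places the Fermat in the hypothesis of Theorem \ref{thm:mainLinesThm}(1), producing a free line on $X_0$ and hence separable rational connectedness by Tian's theorem \cite{Tian}. If $p \leq d-1$ and $p \nmid d$, free lines need not exist (as with the degree $p+1$ Fermat discussed in Section \ref{sec-lines}), so one instead invokes Conduché's construction (Corollaire 3.18 of \cite{conduche}) of explicit very free rational curves of higher degree built from Frobenius-twisted parametrizations together with $d$-th roots of unity; this argument directly covers $d \mid p^r+1$, and analogous Frobenius pullbacks extend it to the remaining $p \nmid d$. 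The numerical condition $k \geq 2d+1$ is exactly what is needed to force ampleness (rather than merely global generation) of $f^*T_{X_0}$ in the degree count on $\PP^1$.

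For the second claim, suppose $X_0 = X \cap \Lambda$ for a $k$-plane $\Lambda \subset \PP^n$. Then $X_0 \subset X$ is a complete intersection cut out by $n-k$ hyperplanes, so
\[
N_{X_0/X} \cong \cO_{X_0}(1)^{\oplus (n-k)}.
\]
By the first claim there is a very free $f : \PP^1 \to X_0$, and pulling back the relative tangent/normal sequence along $f$ gives
\[
0 \to f^*T_{X_0} \to f^*T_X \to \cO_{\PP^1}(\deg f)^{\oplus (n-k)} \to 0.
\]
Both the sub- and quotient bundles are direct sums of strictly positive line bundles on $\PP^1$. For any quotient line bundle $f^*T_X \twoheadrightarrow L$, the composition $f^*T_{X_0} \to L$ is either zero — in which case $L$ is a quotient of $\cO_{\PP^1}(\deg f)^{\oplus (n-k)}$ and so $\deg L \geq \deg f > 0$ — or nonzero, in which case some positive summand of $f^*T_{X_0}$ maps nontrivially to $L$, forcing $\deg L$ to be at least the degree of that summand and hence positive. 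Since every quotient line bundle of $f^*T_X$ has positive degree, $f^*T_X$ is ample on $\PP^1$, so $f$ is very free in $X$ and $X$ is separably rationally connected.

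I expect the main technical obstacle to be completing the Frobenius-twist construction in the first claim for those $p \nmid d$ not of the form $d \mid p^r+1$; the second claim is essentially formal once the first is granted, via the normal bundle computation above.
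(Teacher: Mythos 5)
There is a genuine gap in your proof of the first claim, and it sits exactly where the new content of the proposition lies. Your case analysis handles $p \geq d$ (via Theorem \ref{thm:mainLinesThm}(1)) and the case $d \mid p^r+1$ (via Conduch\'e), but for the remaining primes with $p < d$, $p \nmid d$, and $d \nmid p^r+1$ you only assert that ``analogous Frobenius pullbacks extend it,'' and you yourself flag this as the unresolved technical obstacle. Since the case $d \mid p^r+1$ is already in the literature (the paper cites Corollaire 3.18 of \cite{conduche} for precisely this), what your proposal establishes is essentially only what was already known; the proposition's point is to cover all $p \nmid d$ uniformly. The paper does this with a single explicit construction that you are missing: because $k \geq 2d+1$, one can form $d+1$ coordinate pairs and take the $d$-plane $\Lambda = V(x_0 - \mu x_1, \dots, x_{2d} - \mu x_{2d+1}, x_{2d+2}, \dots, x_k)$ with $\mu^d = -1$, which lies entirely in the Fermat; the degree-$d$ rational normal curve $(t:s) \mapsto (\mu t^d : t^d : \mu t^{d-1}s : t^{d-1}s : \cdots : \mu s^d : s^d : 0 : \cdots : 0)$ inside $\Lambda$ is then checked directly to be free on $X_0$. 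This works for every $p \nmid d$, needs no Frobenius twisting, and explains the bound $k \geq 2d+1$ as the condition for $\Lambda$ to exist (you need $2d+2$ coordinates to pair off), not as a degree count forcing ampleness. Note also that the paper only produces a \emph{free} curve and then concludes separable rational connectedness from Tian's theorem for smooth Fano hypersurfaces, rather than exhibiting a very free curve.

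Your second claim is fine and matches the paper: both use the sequence $0 \to T_{X_0}|_C \to T_X|_C \to \cO_C(1)^{\oplus(n-k)} \to 0$ coming from $N_{X_0/X} \cong \cO_{X_0}(1)^{\oplus(n-k)}$. Your quotient-line-bundle argument for ampleness is correct if one has a very free curve on $X_0$, but since the paper only constructs a free curve, the cleaner route (and the one the paper takes) is to observe that $T_X|_C$ has no negative summands, so $C$ is free on $X$, and then apply \cite{Tian} once more. As written, your second part is conditional on a very free curve that your first part does not actually supply outside the cases already covered by Theorem \ref{thm:mainLinesThm} and \cite{conduche}.
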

\begin{proof}
    We construct a free rational curve explicitly. Consider the $d$-plane $$\Lambda=V(x_0-\mu x_1, \dots, x_{2d}-\mu x_{2d+1}, x_{2d+2},\dots,x_k)$$ in the Fermat hypersurface where $\mu$ is a $d$-th root of $-1$. A simple computation shows that the rational normal curve of degree $d$, given by the image of the morphisms $f:  \PP^1 \to \Lambda$, $$f(t:s)=(\mu t^d: t^d: \mu t^{d-1}s: t^{d-1}s:\dots:\mu s^d:s^d:0:\dots:0)$$ gives a free rational curve on the Fermat hypersurface. Let $X_0$ be the Fermat hypersurface in $\PP^k$ and $X$ be the original hypersurface that $X_0$ is a $k$-plane section of. Then the sequence
\[ 0 \to T_{X_0}|_C \to T_X|_C \to \cO_C(1)^{n-k} \to 0 \]
shows that the curve $C$ will be free on the original $X$ as well, showing that $X$ is also separably rationally connected.
\end{proof}
 Thus, for a given hypersurface of degree $d$, finding a Fermat section of dimension at least $2d+1$  shows that the original hypersurface was separably rationally connected.

\section{hypersurfaces of fixed degree and large dimension} \label{sec-lowChar}

Suppose $X$ is a smooth hypersurface of degree $d$ and $k$ a positive integer. For $n$ sufficiently large compared to $d$ and $k$, any smooth hypersurface of degree $d$ in $\PP^n$ contains a $k$-plane. We expect that such $k$-planes should contain
rational curves that are free in $X$. In this section, we explore this idea and verify it in the case $d=4$. 

Suppose $\Lambda$ is a $k$-plane contained in a smooth Fano hypersurface of degree $d$ in $\PP^n$ over an algebraically closed field of arbitrary characteristic. Then we have a short exact sequence $$ 0 \to M \to \cO_{\PP^k}(1)^{\oplus n+1} \to \cO_{\PP^k}(d) \to 0$$
where the map $ \cO_{\PP^k}(1)^{\oplus n+1} \to \cO_{\PP^k}(d)$ is given by the partial derivatives of the defining equation of $X$ and $M$ sits in the short exact sequence 
$0 \to \cO \to M \to T_{X}|_{\PP^k} \to 0$. In particular, a rational curve $C$ in $\Lambda$ is free in $X$ if and only if $M|_C$ has no negative summand. This leads to the following more general question:

\begin{question} \label{ques:kernelBundle}
Suppose $V$ is a base-point free linear system of dimension $k+1$ of polynomials of degree $r$ in $\PP^k$ for $k \geq r \geq 2$. Consider the surjective map $\cO_{\PP^k}(1)^{\oplus k+1} \to \cO(r+1)$ induced by $V$ and denote its kernel by $M$. Are there always rational curves $C$ in $\PP^k$ such that $M|_C$ is globally generated?
\end{question}

An affirmative answer to the above question implies separable rational connectedness in high dimensions.

\begin{proposition}
   If Question \ref{ques:kernelBundle} has an affirmative answer for $r=d-1$ and some $k \geq r$, then smooth hypersurfaces of sufficiently large dimension compared to $d$ are separably rationally connected. 
\end{proposition}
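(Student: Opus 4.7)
Assume Question \ref{ques:kernelBundle} admits an affirmative answer for $r = d-1$ and some $k \geq d-1$. The plan is to show that for every smooth hypersurface $X = V(f) \subset \PP^n$ of degree $d$ with $n$ sufficiently large relative to $d$, one can produce a free rational curve in $X$, which by the theorem of Tian quoted in the introduction upgrades to separable rational connectedness.

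For $n$ large enough, $X$ contains a $k$-plane $\Lambda \cong \PP^k$; fix one and choose coordinates so that $\Lambda = V(x_{k+1},\ldots,x_n)$. Writing $f = \sum_{l=k+1}^n x_l h_l$, the partials $\partial f/\partial x_j$ restrict to zero on $\Lambda$ for $j\leq k$, while the transverse partials restrict to $h_l|_\Lambda \in H^0(\Lambda,\cO_\Lambda(d-1))$; these are base-point free on $\Lambda$ because $X$ is smooth, and they span a subspace $U \subseteq H^0(\Lambda,\cO_\Lambda(d-1))$. I expect that for $n$ sufficiently large and $\Lambda$ general in $F_k(X)$ one has $\dim U \geq k+1$, to be established by a semi-continuity argument together with a dimension count on the incidence variety parametrizing $k$-planes with low-rank transverse partials. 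Granting this, a general choice of $k+1$ sections $t_0,\ldots,t_k \in U$ has no common zero on $\Lambda$ by a standard incidence count, and expressing each $t_j$ as a linear combination of the partials gives an inclusion $\cO_\Lambda(1)^{\oplus k+1} \hookrightarrow \cO_\Lambda(1)^{\oplus n+1}$ as a direct summand, compatible with the surjections onto $\cO_\Lambda(d)$. A standard diagram chase (snake lemma or nine lemma) then produces
\[
0 \to M' \to M \to \cO_\Lambda(1)^{\oplus n-k} \to 0,
\]
where $M'$ is the kernel of the new surjection $\cO_\Lambda(1)^{\oplus k+1} \to \cO_\Lambda(d)$ defined by the $t_j$.

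Now I apply the hypothesis: the span $V = \langle t_0,\ldots,t_k\rangle$ is a $(k+1)$-dimensional base-point free linear system of polynomials of degree $d-1$ on $\PP^k$ with associated kernel bundle $M'$, so Question \ref{ques:kernelBundle} furnishes a rational curve $C \subset \Lambda$ on which $M'|_C$ is globally generated. Restricting the displayed sequence to $C$ and noting that $\cO_\Lambda(1)|_C$ has positive degree $a$, both outer terms of
\[
0 \to M'|_C \to M|_C \to \cO_C(a)^{\oplus n-k} \to 0
\]
are globally generated on $\PP^1$, and hence so is $M|_C$ (twist by $\cO_{\PP^1}(-1)$ and use $H^1 = 0$ at the two ends). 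The quotient $T_X|_C$ coming from $0 \to \cO_\Lambda \to M \to T_X|_\Lambda \to 0$ is then globally generated, so $C$ is a free rational curve on $X$.

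I anticipate the main obstacle to be the intermediate step requiring $\dim U \geq k+1$ for a suitable $\Lambda \subset X$ when $n$ is large; the remainder of the argument is a direct diagram chase combined with the standard observation that on $\PP^1$ an extension of globally generated bundles is globally generated.
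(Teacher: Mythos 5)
Your proposal follows exactly the paper's route: restrict the partial derivatives of $f$ to a $k$-plane $\Lambda\subset X$, extract a $(k+1)$-dimensional base-point-free subsystem of degree $d-1$, and use the resulting sub-kernel-bundle $M_\Lambda\subset M|_\Lambda$ together with the sequence $0\to M_\Lambda|_C\to M|_C\to \cO_C(1)^{\oplus n-k}\to 0$ to conclude that $M|_C$, and hence $T_X|_C$, is globally generated; the diagram chase, the ``extension of globally generated bundles on $\PP^1$ is globally generated'' step, and the appeal to Tian's theorem are all fine. The one step you flag as the main obstacle --- that the span $U$ of the restricted partials has dimension at least $k+1$ --- needs neither semicontinuity nor an incidence count, and that route would in any case be awkward: you need the bound for some $\Lambda$ inside \emph{every} smooth $X$, not for a general pair, so arguing via generality in $F_k(X)$ or in moduli is not obviously available. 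The direct argument is one line: $U$ is base-point free on $\Lambda$ (the partials of $f$ have no common zero on $\Lambda\subset X$ because $X$ is smooth), so it defines a morphism $\Lambda\to\PP^{\dim U-1}$ which is finite since $\cO_\Lambda(d-1)$ is ample; as $\dim\Lambda=k$, this forces $\dim U\geq k+1$, and then $k+1$ general members of $U$ again form a base-point-free system. With that observation inserted, your argument is complete and coincides with the paper's proof.
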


\begin{proof}
Observe that for $n$ sufficiently large relative to $k,d$, every smooth hypersurface of degree $d$ in $\PP^n$ contains a $k$-plane $\Lambda$. Because the partial derivatives of the defining equation of $X$ form a basepoint-free linear series on $X$, it follows that we can select $k+1$ of them that form a basepoint-free linear series on $\Lambda$, giving us a subsheaf $M_{\Lambda}$ of $M|_{\Lambda}$. Let $C$ be the curve affirmatively answering Question \ref{ques:kernelBundle} for $M_{\Lambda}$. Then the sequence 
\[ 0 \to M_{\Lambda}|_C \to M|_C \to \cO_C(1)^{n-k} \to 0 \] 
shows that $M|_C$ will be globally generated. Thus, $T_X|_C$ will be as well.
\end{proof}

In this section, we show that the answer to the question is positive in all characteristics when $r=3$ and $n=k=3$:

\begin{proposition}\label{degree4}
Suppose $V$ is a base-point free linear system of dimension $4$ of polynomials of degree $3$ in $\PP^3$, and $M$ is the kernel of the surjection $\cO(1)^4 \to \cO(4)$. Then there exists a rational curve $C$ such that $M|_C$ is globally generated. Moreover $C$ can be taken to be 
\begin{itemize}
    \item a twisted cubic if the characteristic is $3$ and $V$ is generated by $x_0^3,x_1^3,x_2^3,x_3^3$, and 
    \item a line or a smooth conic otherwise.
\end{itemize}
In particular, a quartic hypersurface of dimension $\geq 11$ is separably uniruled by rational curves of degree at most 3.
\end{proposition}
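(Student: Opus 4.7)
The plan is to reduce global generation of $M|_C$ to a concrete injectivity condition and then find $C$ by case analysis on the geometry of $V$. For any rational curve $C \subset \PP^3$, restricting the defining sequence $0 \to M \to \cO_{\PP^3}(1)^{\oplus 4} \to \cO_{\PP^3}(4) \to 0$ yields a degree-$0$ rank-$3$ bundle $M|_C$ on $\PP^1$, which is globally generated precisely when it is isomorphic to $\cO^{\oplus 3}$, equivalently when $H^0(M|_C(-1)) = 0$. Twisting the defining sequence by $\cO_{\PP^3}(-1)$, restricting to $C$, and taking cohomology on $\PP^1$, this reads as the injectivity of the evaluation map
\[
\mathrm{ev}_C : V \;\longrightarrow\; H^0(\cO_C(3)), \qquad g \mapsto g|_C,
\]
i.e.\ $v_0|_C, \dots, v_3|_C$ are linearly independent in $H^0(\cO_C(3))$; equivalently, $C$ is not contained in the cubic surface $V(g)$ for any $g \in V \setminus \{0\}$. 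It therefore suffices to produce a rational curve of degree at most three lying on no $V(g)$.

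The first attempt is to take $C$ a line. Consider the incidence $I_1 = \{([C], [g]) \in G(1,3) \times \PP V : C \subset V(g)\}$; some line works unless $I_1$ surjects onto $G(1,3)$. Since $\dim G(1,3) = 4 > 3 = \dim \PP V$, surjectivity forces the generic $V(g)$ to contain a positive-dimensional family of lines, ruling out in particular the case where a generic $V(g)$ is a smooth cubic surface.

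If every line fails, I would next try smooth conics. The space of smooth conics in $\PP^3$ has dimension $8$, so the analogous incidence, if surjective onto conics, forces the generic $V(g)$ to contain a $5$-dimensional family of smooth conics. Such a family must fill out all conics in a fixed $\PP^2$, so $V(g)$ contains a plane as an irreducible component; by closedness of the reducible locus, every $g \in V$ then factors as $g = L_g \cdot Q_g$ for some linear form $L_g$ and quadric $Q_g$. A Bertini-type analysis combined with base-point freeness of $V$ then forces $V$ to be a "composed" $4$-dimensional system of such cubics: the naive composed example $\Sym^3 \langle L_1, L_2\rangle$ has a base line and is excluded, and the only remaining base-point-free option is $V = \langle x_0^3, x_1^3, x_2^3, x_3^3 \rangle$ in characteristic $3$, where Frobenius additivity makes $\{L^3 : L \in H^0(\cO_{\PP^3}(1))\}$ into a $4$-dimensional linear subspace of $H^0(\cO_{\PP^3}(3))$. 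I expect this classification step --- particularly verifying that no other composed or characteristic-dependent structure slips through --- to be the main obstacle of the proof.

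Finally, for the exceptional characteristic-$3$ Fermat case, I would take $C$ to be the standard twisted cubic $(t^3 : t^2 s : t s^2 : s^3)$. Then $v_i|_C = t^{9-3i} s^{3i}$ for $i = 0, 1, 2, 3$ are four distinct monomials, hence linearly independent in $H^0(\cO_{\PP^1}(9))$, so $M|_C \cong \cO^{\oplus 3}$ by the criterion. The separable-uniruledness statement for a quartic $X \subset \PP^n$ with $n \geq 12$ then follows by combining this with the existence of a $3$-plane $\Lambda \subset X$ (guaranteed by the expected dimension of $F_3(X)$ from the previous section) and the argument at the start of this section, which transfers global generation from the kernel bundle $M$ on $\Lambda$ to $T_X|_C$ via a standard short exact sequence, producing a free rational curve of degree at most $3$ on $X$.
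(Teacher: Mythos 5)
Your reduction at the start contains a twist error that undermines the whole strategy for curves of degree greater than one. Since $M$ has degree $0$ and rank $3$, the bundle $M|_C$ on $\PP^1$ is globally generated iff $M|_C\cong\cO^{\oplus 3}$ iff $H^0\bigl(M|_C\otimes\cO_{\PP^1}(-1)\bigr)=0$. But the kernel of your evaluation map $\ev_C\colon V\to H^0(\cO_C(3))$ computes $H^0\bigl(M\otimes\cO_{\PP^3}(-1)|_C\bigr)=H^0\bigl(M|_C\otimes\cO_{\PP^1}(-e)\bigr)$, where $e=\deg C$. These agree only for lines. For a smooth conic, injectivity of $\ev_C$ (i.e.\ $C$ lying on no member of $V$) only forces every summand of $M|_C$ to have degree $\le 1$, which together with $\sum a_i=0$ still permits $\cO(1)\oplus\cO\oplus\cO(-1)$ and $\cO(1)^{\oplus 2}\oplus\cO(-2)$; for a twisted cubic it permits summands up to $\cO(2)$. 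So even if your incidence count produces a smooth conic contained in no $V(g)$, you have not shown $M|_C$ is globally generated, and the same problem afflicts your verification for the twisted cubic in the characteristic-$3$ Fermat case (there the conclusion happens to be true, but it requires exhibiting the three explicit degree-$3$ syzygies of $(s^9,s^6t^3,s^3t^6,t^9)$, not merely the injectivity of $\ev_C$). The line step and the final transfer to $T_X|_C$ for $n\ge 12$ are fine, and your dimension count correctly identifies when the general member of $V$ contains a plane, but that is not the statement you need.

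The paper's proof confronts exactly the case your criterion cannot see: when the general line has $M|_l\cong\cO(1)\oplus\cO\oplus\cO(-1)$, no line works, yet the system need not be the Fermat one. There the argument takes a \emph{reducible} conic $Z=l_1\cup l_2$ through a general point, analyzes the possible gluing data of the two restrictions at the node, uses the stability of $M$ (established by a separate cohomological lemma) to rule out gluings that would produce a destabilizing subsheaf, and invokes a smoothing result to conclude that a general smooth conic deforming $Z$ has $M|_C\cong\cO^{\oplus 3}$. Some such argument controlling the positive summands, not just the negative ones, is indispensable; your incidence-variety approach with smooth conics cannot be repaired simply by sharpening the dimension counts.
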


We also show that the answer to Question \ref{ques:kernelBundle} has an affirmative answer in characteristic $0$ when $k \geq r+1$. 

\begin{proposition}\label{char0-free}
If the characteristic of the base field is zero and $k \geq r+1$, then the answer to Question \ref{ques:kernelBundle} is positive.
\end{proposition}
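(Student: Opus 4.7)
The plan is to find a line $L \subset \PP^k$ on which $M|_L$ is globally generated; then $C = L$ gives the desired rational curve. The approach combines a cohomological reduction with a dimension count on an incidence variety.

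First, reduce to a surjectivity statement. Restrict the defining sequence $0 \to M \to \cO(1)^{\oplus k+1} \to \cO(r+1) \to 0$ to a line $L \cong \PP^1$ and twist by $\cO_L(-1)$ to obtain
\[
0 \to M|_L(-1) \to \cO_L^{\oplus k+1} \to \cO_L(r) \to 0.
\]
Taking global sections identifies $H^1(M|_L(-1))$ with the cokernel of the natural restriction $V \to H^0(L, \cO_L(r))$. Writing $M|_L \cong \bigoplus_i \cO_L(a_i)$ on $\PP^1$ with $\sum a_i = k - r$, global generation of $M|_L$ is equivalent to all $a_i \geq 0$, which is the same as $h^1(M|_L(-1)) = 0$. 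Hence it suffices to find a line $L$ on which $V \to H^0(L, \cO_L(r))$ is surjective.

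Next, consider the incidence variety
\[
\Sigma = \{ (L, [v]) \in G(2, k+1) \times \PP V : v|_L \equiv 0 \}
\]
with projections $\pi_1, \pi_2$ to the two factors. The fiber of $\pi_1$ over $L$ has dimension at least $k - r - 1$, with equality exactly when the desired surjectivity holds at $L$. Hence it is enough to show $\dim \Sigma = (2k - 2) + (k - r - 1) = 3k - r - 3$. The reverse inequality $\dim \Sigma \geq 3k - r - 3$ is automatic, so I upper-bound $\dim \Sigma$ using $\pi_2$, whose fiber over $[v]$ is the Fano scheme $F_1(V(v))$ of lines on the hypersurface $V(v) \subset \PP^k$.

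Characteristic zero enters via Bertini's theorem: since $V$ is base-point-free, a general $v \in V$ defines a smooth hypersurface $V(v)$ of degree $r \leq k - 1$ in $\PP^k$. For such a general smooth hypersurface, the Fano scheme of lines has its expected dimension $2k - r - 3$. Granting that this expected dimension is attained for a general element of the $k$-dimensional linear system $\PP V$, one obtains $\dim \Sigma \leq k + (2k - r - 3) = 3k - r - 3$, hence equality, and the general fiber of $\pi_1$ has dimension $k - r - 1$. This produces a line $L$ where $V \to H^0(L, \cO_L(r))$ is surjective, so $M|_L$ is globally generated.

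The main obstacle is the last granted step: for a general $v$ in the particular $k$-dimensional family $\PP V$, one must verify that $F_1(V(v))$ attains its expected dimension, rather than just for a general hypersurface of degree $r$ in $\PP^k$. Ruling out the pathological possibility that $\PP V$ lies inside the excess-dimension locus of the Hilbert scheme of degree-$r$ hypersurfaces is where the hypothesis $k \geq r+1$ enters, providing enough flexibility in $V$ to carry out the argument in characteristic zero.
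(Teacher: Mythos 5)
Your reduction is fine as far as it goes: $M|_L$ is globally generated if and only if the restriction $V \to H^0(L,\cO_L(r))$ is surjective, and the incidence variety $\Sigma$ is the right object to study. But the proof has a genuine gap at exactly the point you flag, and it is not a small one. First, the step you ``grant'' --- that a general member of the particular $k$-dimensional system $\PP V$ has Fano scheme of lines of the expected dimension $2k-r-3$ --- does not follow from Bertini. Smoothness of the general member does not imply the Fano scheme has the expected dimension: for arbitrary smooth hypersurfaces of degree $r$ in $\PP^k$ with $k\ge r+1$ this is essentially the Debarre--de Jong range, which is open in general, and since $\PP V$ is only a $k$-dimensional family it could a priori lie inside the excess-dimension locus. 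Base-point-freeness does not obviously prevent this, and you give no mechanism by which $k\ge r+1$ ``provides flexibility.'' Second, even granting that step, the bound $\dim\Sigma \le k + (2k-r-3)$ does not follow: a component of $\Sigma$ lying over a \emph{proper} subvariety of $\PP V$ (for instance over members containing linear subspaces, whose Fano schemes are excess-dimensional) is not controlled by the general fiber of $\pi_2$, and such a component could still dominate $G(2,k+1)$ and force the general fiber of $\pi_1$ to be too large. Without a stratified count over the loci $\{v : \dim F_1(V(v)) = j\}$ intersected with $\PP V$, the argument does not close.

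The paper's proof takes a completely different route that sidesteps these issues. It first proves that the kernel bundle $M$ is stable (by a reverse-induction vanishing argument on exterior powers of $M(-1)$), and then applies the Grauert-Mulich restriction theorem of \cite{PRT} to a \emph{general rational curve $C$ of large degree} $e > \frac{k^2-3k}{2(k-r)}$: stability forces the splitting type $a_1\le\dots\le a_k$ of $M|_C$ to satisfy $a_{i+1}-a_i\le 1$, and since $\sum a_i = e(k-r)>0$ with $a_i\le e$, having $a_1\le -1$ would give $\sum a_i \le \frac{(k-1)(k-2)}{2}-1 < e(k-r)$, a contradiction. Characteristic zero enters through the restriction theorem, not through Bertini, and the curve produced is of high degree rather than a line. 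If you want to salvage your line-based approach, you would need an independent proof that the excess-Fano locus meets $\PP V$ in the right codimension, which is a substantial missing ingredient.
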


For the proofs, we require the following lemma: 
\begin{lemma}
The locally free sheaf $M$ from Question \ref{ques:kernelBundle} is stable.
\end{lemma}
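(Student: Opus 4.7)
My plan is to realize $M$ as the pullback of a classical stable bundle along the finite morphism defined by $V$, and then invoke standard preservation results for slope (semi)stability under finite morphisms. Set $M' := M(-1)$, so the defining sequence becomes $0 \to M' \to \cO_{\PP^k}^{k+1} \to \cO_{\PP^k}(r) \to 0$. Since $V$ is basepoint-free of dimension $k+1$, it determines a finite surjective morphism $\phi : \PP^k \to \PP^k$ of degree $r^k$. Pulling the Euler sequence $0 \to \Omega_{\PP^k}(1) \to \cO^{k+1} \to \cO(1) \to 0$ back along $\phi$ recovers exactly the sequence above, giving an identification $M' \cong \phi^*\Omega_{\PP^k}(1)$.

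Next, I would recall the classical fact that $\Omega_{\PP^k}(1)$ is slope-stable on $\PP^k$: any saturated subsheaf $F \subset \Omega_{\PP^k}(1)$ of rank $s$ with $c_1(F) \geq 0$ would yield, via $\det F \subset \Lambda^s \Omega_{\PP^k}(1) = \Omega_{\PP^k}^s(s)$, a nonzero section of $\Omega_{\PP^k}^s(s)$ for $s < k$, contradicting Bott vanishing. I would then invoke the theorem of Mehta--Ramanathan, extended by Langer to positive characteristic, that pullback by a finite surjective morphism of smooth projective varieties preserves slope semistability. This immediately gives slope semistability of $M'$, and hence of $M$. Whenever $\phi$ is separable, pullback preserves strict stability as well, and the conclusion follows.

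The main obstacle is the upgrade from semistability to strict stability in the inseparable case---for instance the Fermat/Frobenius example $V = \langle x_0^p, \ldots, x_k^p\rangle$ in characteristic $p$---where Frobenius pullback can strictly reduce stability. To handle these cases I would argue directly via a Hoppe-type cohomological criterion: strict stability of $M$ reduces to showing $H^0(\Lambda^q M(n)) = 0$ for $1 \leq q \leq k-1$ and all integers $n$ for which $\mu(\cO(-n)) \geq \mu(\Lambda^q M) = q(k-r)/k$. Using the Koszul-style filtration of $\Lambda^q \cO(1)^{k+1}$ induced by the inclusion $M \subset \cO(1)^{k+1}$, the requisite vanishings reduce inductively to the injectivity of multiplication-by-sections-of-$V$ maps on appropriate cohomology; this injectivity holds because any nontrivial syzygy among the chosen sections of $V$ only appears in a degree high enough to lie above the stability threshold, ruling out destabilizing subsheaves and completing the proof.
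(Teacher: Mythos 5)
Your identification $M(-1)\cong \phi^*\bigl(\Omega_{\PP^k}(1)\bigr)$, where $\phi:\PP^k\to\PP^k$ is the finite degree-$r^k$ morphism defined by $V$, is correct and attractive, but both preservation statements you hang the argument on are false in the generality you need. First, pullback along an arbitrary finite surjective morphism does \emph{not} preserve slope semistability in characteristic $p$: Frobenius pullback can destabilize a (semi)stable bundle (Gieseker's examples), and the theorem you attribute to Mehta--Ramanathan/Langer does not exist in that form for inseparable maps. It can be rescued on $\PP^k$ because $\mu_{\max}(\Omega_{\PP^k})<0$ forces Frobenius pullback to preserve semistability, but you would have to say this and also deal with factoring a general $\phi$ into separable and purely inseparable parts. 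Second, even for finite \emph{separable} morphisms, pullback of a stable bundle is in general only semistable, not stable: the pullback of $\pi_*W$ along an \'etale double cover $\pi$ decomposes as $W\oplus\sigma^*W$. So ``pullback preserves strict stability whenever $\phi$ is separable'' is not a citable fact, and since $\mu(M(-1))=-r/k$ need not have coprime numerator and denominator, semistability alone does not force stability numerically. Note that the case this paper actually needs (Proposition \ref{degree4}, with $V=\langle x_0^3,\dots,x_3^3\rangle$ in characteristic $3$) is precisely a Frobenius twist, so your main route breaks exactly where it matters.

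Your fallback---a Hoppe-type criterion reducing stability to $H^0\bigl(\bigwedge^{b}M(-1)(t)\bigr)=0$ for $t$ below the stability threshold, proved via the Koszul filtration induced by $M(-1)\subset\cO^{k+1}$---is in substance the paper's entire proof, but you have left the decisive step as an assertion. The paper runs a clean reverse induction on $i$: the sequence $0\to \bigwedge^{i+b+1}M'(t-r(i+1))\to \bigwedge^{i+b+1}\cO^{k+1}(t-r(i+1))\to \bigwedge^{i+b}M'(t-ri)\to 0$ gives the vanishing of $H^i\bigl(\bigwedge^{i+b}M'(t-ri)\bigr)$ because the middle term has no $H^i$ for $1\le i\le k-1$ on $\PP^k$ and no $H^0$ when $t<r$, with base case a line bundle at $i=k-b$. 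Nothing about ``syzygies appearing only in high degree'' enters, and your phrasing of that step is not a proof. To repair the writeup, either carry out this induction explicitly or replace the appeal to a general pullback theorem by the specific facts that make it valid on $\PP^k$, together with a separate argument upgrading semistability to stability.
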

\begin{proof}
We show that $M' :=M(-1)$ is stable. Note that $\rank M'=k$ and $\deg M' = -r$. For the sake of contradiction, assume that there exists a subsheaf $E$ of rank $b$, with $1\leq b< k$, and degree $e$ such that $$\frac{e}{b} = \mu(E) \geq \mu(M') = \frac{-r}{k}.$$ The inclusion $0\to E\to M'$ induces

$$0\to \bigwedge^b E\to \bigwedge^b M' $$
which gives a non-zero global section of $\bigwedge^b M'\otimes(\bigwedge^b E)^{\vee}=\bigwedge^b M'(-e)$. Since $-e \leq \frac{rb}{k} <r$, to obtain a contradiction, it suffices to show that $H^0(\PP^k,\bigwedge^b M'(t))=0$ for any $t < r$. More generally we show by reverse induction on $i$ that $$H^{i}(\PP^k, \bigwedge^{b+i}M'(t-ri))=0$$ for every $i$, $0 \leq i \leq k-b$, and $t<r$. 

If $i=k - b$, then $\bigwedge^{i+b} M'$ is a line bundle and $i \leq k-1$, so the vanishing result holds. The exact sequence:
$$0\to M'\to \cO^{k+1} \to \cO(r) \to 0$$
gives the exact sequence:\\
$$0\to \bigwedge^{i+b+1} M'(t-r(i+1))\to \bigwedge^{i+b+1} (V\otimes \cO) (t-r(i+1)) \to (\bigwedge^{i+b}M')(t-ir)\to 0.$$
Observe for $i \geq 1$, $H^{i}$ of the middle term vanishes because $i \leq k-b$ and we are on a $\PP^k$, and for $i=0$, $H^0$ of the middle term vanishes since $t<r$. Thus by induction, $H^i(\PP^k, (\bigwedge^{i+b}M')(t-ir))=0$.
\end{proof}

\begin{proof}[Proof of Proposition \ref{char0-free}]

Let $e >\frac{k^2-3k}{2(k-r)}$, and let $C$ be a general rational curve of degree $e$ in $\PP^k$. Suppose that $$M|_C=\bigoplus_{i=1}^k \cO(a_i), \;\;\; a_1 \leq \dots \leq a_k.$$
Then $\sum a_i = e(k-r)$, and since $M(1)|_C$ is a sub-bundle of $\cO(1)^{k+1}|_C$, we have $a_i \leq e$ for each $i$. 

On the other hand, since $M$ is stable, by \cite[Proposition 3.1]{PRT}, the restriction of $M$ to $C$ satisfies the Grauert-Mulich property: $a_{i+1}-a_i \leq 1$ for every $i$. So if 
$a_1 \leq -1$, then $a_2 \leq 0, \dots, a_{k} \leq k-2$. So $$\sum_{i=1}^k a_i \leq \frac{(k-2)(k-1)}{2}-1 < e(k-r),$$
which is not possible.
\end{proof}

\begin{proof}[Proof of Proposition \ref{degree4}]
Let $V\subset H^0(\mathbb{P}^3,\mathcal{O}_{\mathbb{P}^3}(3))$ be a $4$-dimensional base-point free linear system. Define $M$ as the bundle of rank $3$ that makes the following sequence exact:\\
\[0\to M\to V\otimes \cO_{\mathbb{P}^3}(1)\to \cO_{\mathbb{P}^3}(4)\to 0. \]
We have $\deg M=0$ and $\rank M=3$. For a general line $l$ in $\PP^3$, write $M|_l=\mathcal{O}_l(a_1)\oplus \mathcal{O}_{l}(a_2)\oplus \mathcal{O}_l(a_3)$, with $\sum a_i=0$. Since $M|_l$ injects into $\mathcal{O}_l(1)^4$, we also have $a_i\leq 1$ for all $i$. The only possibilities are 
\begin{itemize}
    \item[(1)] $M|_l\simeq \mathcal{O}_l(1)^{\oplus 2}\oplus \mathcal{O}_l(-2)$, 
    \item[(2)] $M|_l\simeq \mathcal{O}_l^{\oplus 3}$, 
    \item[(3)] $M|_l\simeq \mathcal{O}_l(1)\oplus \mathcal{O}_l(-1
    )\oplus \mathcal{O}_l$.
\end{itemize}

We now analyze each case separately.

\medskip

\paragraph{Case (1)} $M|_l\simeq \mathcal{O}_l(1)^{\oplus 2}\oplus \mathcal{O}_l(-2)$. We show in this case, the characteristic of the base field is $3$ and the linear system is generated by $x_0^3, \dots, x_3^3$. Note that for a general line $l$, we have $h^0(M(-1)|_l) =2$, hence there are two linearly independent polynomials in $V$ which vanish on $l$. 

Define $\Psi=\{(D,[l]):l\subset \{D=0\}\}\subset \mathbb{P}V\times G(1,3)$, and let $\pi_1,\pi_2$ be the projections to the first and second factors, respectively. Then the map $\pi_2$ is dominant and the dimension of every fiber is at least $1$. Therefore $\dim(\Psi)\geq 5$. Since each member of the linear system $V$ is a cubic in $\mathbb{P}^3$, the map $\pi_1$ is surjective, which shows that  general fiber of $\pi_1$ must have dimension at least $2$. The only way a cubic surface can contain a $2$-parameter family of lines is if it contains a $\mathbb{P}^2$ as a component. It follows from the generalized version of Bertini's theorem in characteristic $p$ (\cite[Theorem 6.3]{Jou} and \cite[Theorem I.6.3]{Zar}) that the characteristic is $3$ and $V$ is generated by $x_0^3,x_1^3,x_2^3,$ and $x_3^3$.

Consider now the twisted cubic $C$ in $\mathbb{P}^3$ which is given by parametrization $[s^3:ts^2:t^2s:t^3]$. A direct computation shows that $M|_C$ is globally generated. 

 \medskip
 
\paragraph{Case (2)} $M|_l=\mathcal{O}_l^{\oplus 3}$. In this case, a general line satisfies the desired property.

\medskip

\paragraph{Case (3)} $M|_l=\mathcal{O}_l(-1)\oplus \mathcal{O}_l\oplus \mathcal{O}_l(1)$. Let $Z=l_1\cup l_2$ be a reducible conic in $\mathbb{P}^3$, where $l_1$ and $l_2$ are general lines through a general point $p$ in $\PP^3$. 
 We show that in this case, the smoothing of the nodal curve $Z$ will be the desired $C$ in the statement. Using notation from \cite{LR}, we can write $M|_Z=\bigoplus_{i=1}^{3}\cO_Z(a_i,b_i)$. We now analyze this scenario by looking at different subcases:
 
 \begin{itemize}
     \item[(I)] There is a $i$ such that $a_i=b_i=-1$. In this case there is a well defined subspace $\mathcal{E}(p)$ of $M_p\otimes k(p)$ spanned by the restriction of the rank-two summands $\mathcal{O}\oplus \mathcal{O}(1)$ from the two components of $Z$ to $p$. These subspaces glue together to give a subbundle $\mathcal{E}\subset M$ that has degree $1$. This contradicts the stability of $M$.
     \item[(II)] There is $i,j$ such that $a_i=-1,b_i=1$ and $a_j=1,b_j=-1$. In this case, applying Theorem $4.12$ from \cite{LR} shows us that a general smoothing $Z'$ of $Z$ has $M|_{Z'} = \cO^3$.
     \item[(III)] There exists $i,j$ such that $a_i=-1,b_i=0$ and $a_j=0,b_j=-1$. In this case, one can repeat the argument in case (I) with $M$ replaced by its dual.
     \item[(IV)] There is $i,j$ such that $a_i=-1,b_i=0$, and $a_j=0,b_j=1$. Without loss of generality and for brevity, assume $i=1,j=2$. Let $E_1$ and $E_2$ be the subspaces of $M_p \otimes k(p)$ obtained by restricting the $\cO(1)$ summand to $l_1$ and $l_2$, respectively, and let $F_1$ and $F_2$ denote the subspaces obtained by restricting $\cO(1)\oplus \cO$. We show that this case is impossible by proving that for general lines $l_1$ and $l_2$, we have $E_1 \not\subset F_2$ and $E_2 \not\subset F_1$.

     Consider the open subset of pairs $(l_1,l_2)$ such that $E_1\not\subset F_2$. We are done if this is non-empty, since then by the symmetry of its definition we will also have $E_2 \not\subset F_1$ over a non-empty set. 
 Suppose that this set is empty. Fix a general line $l_1$ through a general point $p$, and let $E_1\subseteq F_1\subseteq E$ be the filtration on $E:=M_p\otimes k(p)$. We then have that for any other general line $l_2$ through $p$ and the corresponding filtration $E_2\subseteq F_2\subseteq E$, $E_1\subseteq F_2$, so $E_1 \subset F_1 \cap F_2$. So we must have either $E_1=E_2$ or $F_1=F_2$ for all pairs of general lines $(l_1,l_2)$ through $p$. We can therefore again glue these subspaces together to get destabilizing sub-bundles like in case (I), and therefore we are done.
\end{itemize}

If $d=4$ and $n \geq 12$, then every quartic $X$ in $\PP^n$ contains a linear subvareity of dimension $3$, so $X$ contains a non-singular free rational curve of degree at most $3$.
\end{proof}

\section{Higher degree curves} \label{sec=highDegreeCurves}

We extend our results on lines on hypersurfaces to higher degree rational curves to prove that under similar assumptions, the space of degree $e$ rational curves on a smooth hypersurface has the expected dimension.

Let $X$ be a non-singular hypersurface of degree $d$ over an algebraically closed field of characteristic $p$. Denote by $\overline{M}_{0,0}(X,e)$ the Kontsevich moduli scheme parametrizing tuples $(C,f)$ where $C$ is a projective, connected, nodal curve of genus $0$ and $f:C\to X$ a stable map of total degree $e$. The expected dimension of 
$\overline{M}_{0,0}(X,e)$ is $e(n+1-d)+n-4$, and the dimension of every irreducible component is larger than or equal to the expected dimension. If $[(C,f)] \in \overline{M}_{0,0}(X,e)$ is such that 
the restriction of $f^*T_X$ to each component has only non-negative summands, then $\overline{M}_{0,0}(X,e)$ has the expected dimension at $[(C,f)]$.

We call an irreducible component $M$ of $\overline{M}_{0,0}(X,e)$ {\it{dominating}} if the images of the maps parametrized by $M$ sweep out $X$. Following the proof of Theorem 5.1 in \cite{BR21}, we prove

\begin{theorem}\label{higher-degree}
Let $X$ be a non-singular hypersurface of degree $d \leq p +1 $, in $\PP^n$ over an algebraically closed field of characteristic $p$. If  $d \leq p$, set $s=-1$, and if $d =p+1$, let $s$ be the dimension of the singular locus of a general member of $\D$. If $s\leq n-2d$, then every irreducible component of $\overline{M}_{0,0}(X,e)$ is dominating and has the expected dimension whenever $d < \frac{n+e-s-1}{e+1}$. 
\end{theorem}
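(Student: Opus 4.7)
The plan is to induct on $e$, following Theorem 5.1 of \cite{BR21} with adjustments for positive characteristic. The base case $e=1$ is Theorem \ref{thm:mainLinesThm}: for $e=1$ the bound $d < (n+e-s-1)/(e+1)$ reads $d < (n-s)/2$, which together with $s \leq n-2d$ is precisely what is needed to conclude that every component of $F_1(X) = \overline{M}_{0,0}(X,1)$ has expected dimension and is dominating. Moreover, the function $e \mapsto (n+e-s-1)/(e+1)$ is decreasing in $e$ under our hypothesis (since $s \leq n-2d$ forces $s < n-2$), so the bound for the given $e$ automatically implies the bound for every smaller $e'$; the induction is therefore well-posed.

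For the inductive step with $e \geq 2$, let $M$ be an irreducible component of $\overline{M}_{0,0}(X,e)$ of dimension $D \geq D_0 := e(n+1-d) + n - 4$. I would compare $M$ with the boundary divisor $\Delta$ parametrizing maps with reducible domain. Its irreducible components are indexed by partitions $e = e_1 + e_2$ with $e_i \geq 1$, and each $\Delta_{e_1, e_2}$ is the image of the gluing map
\[ \overline{M}_{0,1}(X, e_1) \times_X \overline{M}_{0,1}(X, e_2) \longrightarrow \overline{M}_{0,0}(X, e). \]
By induction, each $\overline{M}_{0,1}(X, e_i)$ is pure of dimension $e_i(n+1-d) + n - 3$ and dominates $X$ via evaluation, so a fiber-product computation yields $\dim \Delta_{e_1, e_2} = D_0 - 1$. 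Next I would argue that $M$ must meet $\Delta$: given a general $[f: \mathbb{P}^1 \to X] \in M$, one degenerates $f$ by bubbling off a line at a general point, producing a one-parameter family of stable maps in $M$ whose limit has reducible domain (using properness of $\overline{M}_{0,0}(X,e)$). Then $\dim(M \cap \Delta_{e_1, e_2}) \geq D - 1$, and since this intersection lies inside $\Delta_{e_1, e_2}$ of dimension $D_0 - 1$, we force $D = D_0$. Dominance of $M$ then follows because $M \cap \Delta_{e_1, e_2}$ is non-empty and its constituent factors already sweep out $X$ by the inductive hypothesis.

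The main obstacle is showing that $M$ must meet the boundary in characteristic $p$, where generic smoothness of the gluing and deformation-theoretic bubbling arguments need extra care. The cleaner route, followed in \cite{BR21}, is to bypass the boundary argument and instead run a positive-characteristic analogue of Theorem \ref{nonfree} directly on a general $[f: \mathbb{P}^1 \to X] \in M$. Excess dimension of $M$ forces $h^1(\mathbb{P}^1, f^*T_X) > 0$, so $f^*T_X$ has a summand of degree $\leq -2$; expanding the defining equation of $X$ in suitable coordinates around a general point $q \in f(\mathbb{P}^1)$ produces linear forms whose rank deficiency, exactly as in the proof of Theorem \ref{nonfree}, yields a lower bound on the dimension of the singular locus of some member of $\D$. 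The extra factor of $e$ in the denominator of the hypothesis $d < (n+e-s-1)/(e+1)$ arises from tracking these linear forms simultaneously at the $e$ image points on $f(\mathbb{P}^1)$ cut out by a general hyperplane, and the resulting inequality contradicts the assumption.
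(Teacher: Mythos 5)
Your overall strategy (induct on $e$, degenerate to the boundary, bound the glued locus) is the right family of ideas, but as written there are two genuine gaps, and your fallback ``direct'' argument is not a proof. First, the step ``$M$ must meet $\Delta$'' is not established: you cannot ``bubble off a line at a general point'' of a fixed stable map to produce a boundary degeneration inside $M$; the correct tool is Bend-and-Break \cite[Lemma 5.1]{HRS}, which requires a dimension hypothesis (enough curves through two fixed points). The paper's proof of Lemma \ref{higher-nonfree} handles this by splitting into the case $\dim M < 2n-4$ (where the desired bound holds trivially) and $\dim M \geq 2n-4$ (where Bend-and-Break applies and the boundary locus has dimension at least $\dim M - 1$). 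Second, your claim that a fiber-product computation yields $\dim \Delta_{e_1,e_2} = D_0 - 1$ is false in general: the fibers of the evaluation map $\overline{M}_{0,1}(X,e_2) \to X$ can jump over special points, so knowing each factor is pure of expected dimension does not control the fiber product over $X$. The paper circumvents this with the uniform bound (Lemma 5.2 of \cite{BR21}) that degree-$e_2$ stable maps through \emph{any} point of $X$ form a family of dimension at most $e_2 n - 2$, and accordingly proves the weaker but uniform estimate that the locus of maps with a non-free component has dimension at most $en + d + s - 3$; this, not $D_0 - 1$, is what the hypothesis $d < \frac{n+e-s-1}{e+1}$ is calibrated against. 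Relatedly, your inductive hypothesis is the wrong statement: what feeds into the gluing bound for the degree-$e_1$ factor is the bound on the \emph{non-free} locus (Corollary \ref{bound-nonfree} for $e_1 = 1$, Lemma \ref{higher-nonfree} in general), not purity of $\overline{M}_{0,0}(X,e_1)$.

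Your proposed alternative route also does not work as described. It mischaracterizes \cite{BR21}: that proof does \emph{not} bypass the boundary argument; it is precisely the Bend-and-Break induction sketched above, which this paper adapts. Moreover, the linear-parts analysis in the proof of Theorem \ref{nonfree} is specific to lines (it rests on expanding $f$ as $x_0^{d-1}f_1 + \dots + f_d$ along a line and on Lemma \ref{lines-linearpart}); there is no analogue in the paper for a degree-$e$ rational curve, and ``tracking these linear forms simultaneously at the $e$ image points'' is not a construction that has been defined, let alone shown to produce the stated singular locus of a member of $\D$. Once Lemma \ref{higher-nonfree} is in place, the theorem itself is a one-line comparison: a non-dominating or excess-dimensional component consists entirely of maps with a non-free component, so its dimension is at most $en+d+s-3$, which is strictly less than the expected dimension $e(n+1-d)+n-4$ exactly when $d < \frac{n+e-s-1}{e+1}$.
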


We use the following lemma in the proof. 
\begin{lemma}\label{higher-nonfree}
    With the same assumptions as in Theorem \ref{higher-degree}, the locus in $\overline{M}_{0,0}(X,e)$ parametrizing stable maps with at least one non-free component has
dimension at most $en+d+s-3$.
\end{lemma}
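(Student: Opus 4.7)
The plan is to argue by induction on the degree $e$, following the proof strategy of \cite[Theorem 5.1]{BR21} but substituting our Corollary \ref{bound-nonfree} for its characteristic-zero input. For the base case $e=1$, a stable map of degree one with a non-free component is simply a non-free line, and Corollary \ref{bound-nonfree}(1) gives the bound $n+d+s-3$, which matches $en+d+s-3$.

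For $e\geq 2$, let $\Sigma$ be an irreducible component of the non-free locus in $\overline{M}_{0,0}(X,e)$, and stratify by the combinatorial type of the dual graph. If the generic member of $\Sigma$ is an irreducible stable map $f:\PP^1\to X$, I would first specialize in a one-parameter family to the boundary of $\overline{M}_{0,0}(X,e)$, obtaining a reducible stable map on which the non-free property persists on at least one component by semicontinuity of $h^1$ of the appropriate twist of $f^*T_X$. This costs at most one dimension. So one may assume the generic member of $\Sigma$ is reducible; pick a leaf of the dual graph and write $C=C_1\cup C_2$ with $C_1$ the leaf of degree $e_1$ and $C_2$ the rest of degree $e_2=e-e_1$.

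By the inductive hypothesis, whichever of $C_1$ or $C_2$ contains the non-free component varies in a family of dimension at most $e_i n+d+s-3$. The complementary piece is an arbitrary stable map of the remaining degree passing through the image of the node; I would bound its dimension using the ambient Kontsevich space $\overline{M}_{0,0}(\PP^n,e_{3-i})$ together with the codimension-$(n-1)$ incidence condition imposed by sharing a point of $X$, and in the degree-one case use the sharper Corollary \ref{bound-nonfree}(2) (for non-free lines) together with the expected-dimensional bound for free lines through a general point of $X$. Including the two-dimensional freedom in the placement of the node on each of $C_1$ and $C_2$, the totals work out under the hypothesis $s\leq n-2d$ to the required bound $en+d+s-3$.

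The main obstacle is that we cannot invoke the expected dimension of $\overline{M}_{0,0}(X,e')$ for $e'<e$, because the present lemma is itself a prerequisite for Theorem \ref{higher-degree}, which is what establishes that expected dimension. The complementary piece therefore has to be bounded by the ambient Kontsevich space in $\PP^n$, or alternatively one must run the induction on a strengthened statement that simultaneously controls families of stable maps through a fixed point; the arithmetic closes only because the constraint $s\leq n-2d$ provides enough slack for the $d$-discrepancy per unit of degree to accumulate favorably. A secondary, characteristic-$p$-specific issue is that the bend-and-break degeneration in the irreducible case requires care in positive characteristic; one way to bypass this is a direct analysis of the local expansion of the defining equation of $X$ along the curve, paralleling the proof of Theorem \ref{nonfree} but applied to the normal bundle of a higher-degree rational curve.
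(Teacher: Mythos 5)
Your outline matches the paper's in broad strokes (induction on $e$, bend-and-break to force a reducible member, split off a sub-curve containing the non-free component, bound the two pieces separately), but the quantitative input you propose for the complementary piece does not close the arithmetic, and this is the crux of the argument. The paper bounds the locus of degree-$e_2$ stable maps \emph{in $X$} whose image passes through a fixed point by $e_2 n - 2$, citing Lemma 5.2 of \cite{BR21}; this is a general bend-and-break consequence valid for any projective $X \subset \PP^n$ and needs no expected-dimension statement about $\overline{M}_{0,0}(X,e_2)$, so the circularity you worry about does not arise. Your substitute --- the ambient space $\overline{M}_{0,0}(\PP^n,e_2)$ cut down by the codimension-$(n-1)$ point condition --- gives only $(e_2+1)(n+1)-4-(n-1)=e_2 n + e_2 - 2$, which is weaker by $e_2$. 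Plugging this into the gluing count yields
$$(e_1 n + d + s - 3) + (e_2 n + e_2 - 2) + 1 = en + d + s + e_2 - 4,$$
which exceeds the needed bound $en+d+s-4$ for every $e_2 \geq 1$, so the induction fails even after adding back the dimension lost in degenerating. The hypothesis $s \leq n-2d$ does not rescue this: it supplies a fixed amount of slack independent of $e_2$, whereas the deficit grows linearly in $e_2$. Your alternative suggestion of a strengthened induction simultaneously controlling maps through a fixed point is essentially the right idea, but it is precisely the content of the cited lemma, and you do not supply it.

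Two smaller points. First, your concern about bend-and-break in positive characteristic is unfounded: Mori's lemma holds in all characteristics (the paper invokes \cite[Lemma 5.1]{HRS} with no caveat), and the proposed workaround via a normal-bundle analysis as in Theorem \ref{nonfree} is unnecessary. Second, the paper's treatment of the degeneration step is cleaner than yours: it first disposes of the case $\dim M < 2n-4$ outright (since $2n-4 \leq en+d+s-3$), and only then applies bend-and-break, at which point the locus of reducible members is all of $M$ or a divisor; since every member of $M$ is assumed to have a non-free component, no semicontinuity argument for the persistence of non-freeness is needed.
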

\begin{proof}

We proceed by induction on $e$. For $e=1$, this was proved in Corollary \ref{bound-nonfree}.
Suppose the statement holds for every degree smaller than $e$ and suppose that $M$ is an irreducible subscheme of $\overline{M}_{0,0}(X,e)$ such that the every map parameterized by $M$ has a non-free irreducible component. Let $m=\dim M$. If $m < 2n-4$, then since 
$$2n-4 \leq en+d+s-3,$$
we are done. So suppose $m \geq 2n-4$. Then by the Bend-and-Break Lemma \cite[Lemma 5.1]{HRS}, there exists a stable map with reducible domain parameterzed by $M$. Moreover, the locus of maps with reducible domains is either all of $M$ or a divisor in $M$. Therefore there exist positive integers $e_1, e_2$ with $e_1+e_2=e$ such that the locus of stable maps which decompose as a degree $e_1$ stable map with at least one non-free component glued at a point to a degree $e_2$ stable map has dimension at least $m-1$.

By Lemma 5.2 in \cite{BR21}, the locus of stable maps of degree $e_2$ whose image pass through any point of $X$ is at most $e_2n-2$. By induction hypothesis, the locus of degree $e_1$ stable maps with at least one non-free component has dimension $\leq e_1n+d+s-3$. Hence the locus of glued maps has dimension at most 
$$ (e_1n+d+s-3)+(e_2n-2)+1 \leq en+d+s-4.$$
It follows that
$$ m \leq en+d+s-3,$$ which proves the desired result for $e$.

\end{proof}

\begin{proof}[Proof of Theorem \ref{higher-degree}]
Let $M$ be an irreducible component of $\overline{M}_{0,0}(X,e)$. If there is a map parametrized by 
$M$ such that every irreducible component is free, then $M$ has the expected dimension and is dominating. Otherwise, by Lemma \ref{higher-nonfree}, we have 
$$\dim M \leq en+d+s-3 < e(n+1-d)+n-4,$$
which is not possible. Therefore, every irreducible component is dominating and has the expected dimension.
\end{proof}

When $e=2$, it is possible to improve the bound obtained in the theorem above.
\begin{theorem}
Suppose $X$ is a non-singular hypersurface of degree $d \leq p+1$ over an algebraically closed field of characteristic $p$. If $d\leq p$, set $s=-1$, and if $d=p+1$, let $s$ be the dimension of the singular locus of a general member of $\D$. If $s \leq n-2d-1$, then the space of conics in $X$ has the expected dimension.
\end{theorem}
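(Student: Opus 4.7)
The plan is to adapt the bend-and-break approach of Theorem \ref{higher-degree} and Lemma \ref{higher-nonfree} to the case $e=2$, sharpening the key dimension count so that the weaker hypothesis $s\leq n-2d-1$ suffices. The expected dimension of $\overline{M}_{0,0}(X,2)$ is $3n-2d-2$, and every component has dimension at least this. Let $M$ be an irreducible component. Since $M$ is smooth of the expected dimension at any free stable map, it suffices to suppose every stable map in $M$ has a non-free component and derive a contradiction with $\dim M>3n-2d-2$. From $s\leq n-2d-1$ and $s\geq -1$ one has $n\geq 2d$, so $\dim M\geq 3n-2d-2\geq 2n-4$ and bend-and-break produces a sublocus $M^{\mathrm{red}}\subseteq M$ of reducible stable maps of dimension at least $\dim M-1$. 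The points of $M^{\mathrm{red}}$ are pairs of lines $l_1,l_2\in F_1(X)$ meeting at a point $p$, with at least one of them non-free (as a limit of non-free maps). It suffices therefore to prove that the locus $R$ of triples $(l_1,p,l_2)$ with $l_1\in F_1(X)$ non-free, $l_2\in F_1(X)$, and $p\in l_1\cap l_2$ has dimension at most $3n-2d-4$.

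To bound $\dim R$, I decompose the non-free locus in $F_1(X)$ into irreducible components and analyze one such component $G$ sweeping out $Y\subset X$ of dimension $m$. Theorem \ref{nonfree} gives $\dim G\leq d+m+s-2$. The contribution $R_G$ projects to $Y$ via $(l_1,p,l_2)\mapsto p$, and the fiber over a general $p\in Y$ is the product of $\{l\in G:p\in l\}$, of dimension $\dim G+1-m$, and $F^p(X)$. The key ingredient is the estimate $\dim F^p(X)\leq 2n-d-m-2$ for general $p\in Y$. Since $s\leq n-2d$ and $d\leq (n+1)/2$ (both consequences of $s\leq n-2d-1$), Theorem \ref{thm:mainLinesThm} gives $\dim F_1(X)=2n-d-3$, so the universal line $\cU\to X$ is surjective with generic fiber of dimension $n-d-1$; upper semicontinuity of fiber dimension then implies that $\{p:\dim F^p(X)\geq n-d-1+t\}$ has codimension at least $t$ in $X$, from which the claimed bound follows on any $m$-dimensional subvariety $Y$. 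Assembling,
\[
\dim R_G\leq m+(\dim G+1-m)+(2n-d-m-2)=\dim G+2n-d-m-1\leq 2n+s-3\leq 3n-2d-4,
\]
contradicting $\dim M^{\mathrm{red}}\geq \dim M-1\geq 3n-2d-3$.

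The main obstacle is the semicontinuity estimate on $\dim F^p(X)$: replacing the crude bound $\dim F^p(X)\leq n-d-1$ for general $p\in X$ used implicitly in Theorem \ref{higher-degree} with the refined bound that depends on the dimension of the subvariety through which $p$ ranges is exactly what converts the condition $s\leq n-2d$ into the sharper $s\leq n-2d-1$. The rest is a routine repackaging of the bend-and-break estimate in Lemma \ref{higher-nonfree}.
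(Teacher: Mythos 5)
Your overall strategy---bend-and-break to produce a family of reducible conics of dimension $\geq \dim M-1$, then a dimension count on the incidence variety of (non-free line, node, second line)---is the same as the paper's, and several ingredients are sound: the reduction to the case where every map in $M$ has a non-free component, the bound $\dim G\le d+m+s-2$ from Theorem \ref{nonfree}, and the observation that $\{p:\dim F^p(X)\ge n-d-1+t\}$ has codimension $\ge t$ in $X$ (correct, because under $s\le n-2d$ every component of $F_1(X)$ is dominating of the expected dimension, so $\cU$ has pure dimension $2n-d-2$). The gap is the step $\dim R_G\le m+(\dim G+1-m)+(2n-d-m-2)$. Both factors in your fiber description are controlled only for \emph{general} $p\in Y$, so this bounds only those components of $R_G$ that dominate $Y$ under the node map. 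But $R_G$ is the fiber product $I_G\times_X\cU$ (with $I_G=\{(l_1,p):l_1\in G,\ p\in l_1\}$), and such a fiber product can have excess components sitting entirely over a proper subvariety $Y'\subsetneq Y$ where $\dim F^p(X)$ jumps and/or where the lines of $G$ concentrate; nothing in bend-and-break forces the nodes of the conics in $M^{\mathrm{red}}$ to sweep out all of $Y$, so the component of $R$ containing the image of $M^{\mathrm{red}}$ may be exactly such an excess component. Redoing your count over $Y'$ of dimension $m'<m$ gives at best $\dim R'\le 2n+s+m-m'-3$, which exceeds your target $3n-2d-4$ once $m-m'>n-2d-s-1$ (e.g.\ already for $m-m'\ge 2$ when $s=n-2d-1$), and your semicontinuity statement gives no control at all on the factor $\{l\in G:p\in l\}$ over special $p$.

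The paper closes exactly this hole by bounding the fiber $F^q(X)$ \emph{locally at $[l_2]$} via its Zariski tangent space, $\dim_{[l_2]}F^q(X)\le h^0(N_{l_2/X}(-1))=n-d-1+h^1(N_{l_2/X}(-1))$, a bound valid at every node $q$ whether or not $q$ is general in $X$. The quantity $h^1(N_{l_2/X}(-1))$ is then bounded by $a=\max\{h^1(N_{l_1/X}(-1)),h^1(N_{l_2/X}(-1))\}$ taken at the general point of the irreducible family $N$, and the \emph{same} $a$ sharpens the bound on the family of the worse line to $d+n+s-a-2$ via Theorem \ref{nonfree}; the two occurrences of $a$ cancel, giving $\dim N\le 2n+s-2<3n-2d-2$ with no genericity assumption on where the nodes lie. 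To repair your argument you would need either equidimensionality of $I_G\to Y$ and $\cU\to X$ (not available) or the tangent-space bound, at which point you have reproduced the paper's proof. A smaller omission: the paper also rules out the case that the general map in $M$ is a double cover of a line before running the count; in your setup the degenerate case $l_1=l_2$ is at least formally included in $R$, but you should say so.
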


\begin{proof}
Note that the space of lines in $X$ has dimension at most $n+d+s-3$ by Corollary \ref{bound-nonfree}. Given our conditions on $n$ and $d$, it follows that the space of lines on $X$ has the expected dimension. The expected dimension of $\overline{M}_{0,0}(X,2)$ is $3n-2d-2$. Suppose to the contrary that $\overline{M}_{0,0}(X,2)$  has an irreducible component $M$ which is larger than expected and assume that the images of the maps parameterized by $M$ sweep out a subvariety $Y \subset X$ of dimension $r$. 

Note that every double cover of a line in $X$ is determined by the line together with the two branched points. Since the space of lines on $X$ has the expected dimension, we conclude that the locus of double covers of lines in $M$ has dimension at most $$2n-d-3+2 < 3n-2d-1,$$ so, a general map parametrized by $M$ is not a double cover of a line. 

Since $3n-2d-1 \geq 2n-4 \geq 2\dim Y-2$, there is a family of dimension at least $1$ consisting of stable maps in $M$ whose images pass through two general points of $Y$. By the Bend-and-Break Lemma \cite[Lemma 5.1]{HRS}, it follows that there is an irreducible family $N$ of dimension at least $3n-2d-2$ of degree $2$ stable maps to $Y$ with reducible domains. We now show that this is not possible. 

Let $l_1\cup l_2$ be a general reducible conic in $N$ meeting at a point $q\in Y$. Let
$$a = \max \{ h^1(l_1, N_{l_1/X}(-1)), h^1(l_2,N_{l_2/X}(-1))\}.$$

If $a =0$, then since the family of free lines through every point has dimension $n-d-1$, the dimension of the space of two free lines intersecting at $q$ is $2(n-d-1)$. Hence the dimension of chains of two free lines parametrized by $N$ is $\leq \dim X+2(n-d-1) < 3n-2d-2$, and the claim follows. 

If $a >0$, then by Theorem \ref{nonfree}, the family of lines $l$ with $h^1(l,N_{l/X}(-1))=a$ has dimension at most $d+n+s-a-2$. On the other hand, the dimension of $F^q(X)$ at $[l_2]$ is at most
$$n-d-1 + h^1(l_2, N_{l_2/X}(-1)) \leq  n-d-1+a.$$ Therefore, 
$$\dim N \leq (d+n+s-a-2)+1+(n-d-1+a) \leq 2n+s-2 < 3n-2d-2$$
giving the desired contradiction.

\end{proof}

We remark that the above result is valid in characteristic zero as well and strengthens the previously known bounds \cite{BR21}.

Let $M_{0,0}(X,e)$ be the open subscheme of $\overline{M}_{0,0}(X,e)$ parameterizing stable maps with irreducible domains. The above result shows, in particular, that in charactersitic $p$ if $d\leq p$ and $d <(n+1)/2$, then every dominating component of $M_{0,0}(X,2)$ has the expected dimension. 

In characteristic $0$, every dominating component of $M_{0,0}(X,e)$ generically parametrizes free maps, so every such component has the expected dimension. In characteristic $p$, one can use the same ideas as in \cite{STZ} to prove that for any fixed $e$, if $d \gg p$, then every dominating component of $M_{0,0}(X,e)$ has the expected dimension. A natural question is whether one can find such a bound that does not depend on $e$.

We end this section with a stronger version of this question:
\begin{question}
Suppose $X$ is a non-singular hypersurface of degree $d$ in $\PP^n$ over an algebraically closed field of characterisitic $p$ with $d \leq p$. If $M$ is a dominating family of rational curves of degree $e$ on $X$, then does $M$ have the expected dimension?
\end{question}

\bibliographystyle{siam}
\bibliography{mybib}

\end{document}